\newcommand\Out{\mathrm{Out}}
\newcommand\Aut{\mathrm{Aut}}
\newcommand\BP{\mathrm{PB}}
\newcommand\Stab{\mathrm{Stab}}
\newcommand\calz{\mathcal{Z}}
\newcommand\calo{\mathcal{O}}
\newcommand\calv{\mathcal{V}}
\newcommand\calm{\mathcal{M}}
\newcommand\calf{\mathcal{F}}
\newcommand\card{\mathrm{card}}
\newcommand\N{\mathbb{N}}
\newtheorem{theo}{Theorem}[section]
\newtheorem{lemma}[theo]{Lemma}
\newtheorem{prop}[theo]{Proposition}
\newtheorem{intro}{Theorem}
\newtheorem{introth}[intro]{Theorem}
\newtheorem{introcor}[intro]{Corollary}
\newtheorem{introquestion}[intro]{Question}
\theoremstyle{remark}
\newtheorem{Remk}[theo]{Remark}
\providecommand{\keywords}[1]
{
  \small	
  \textbf{\textit{Keywords---}} #1
}
\providecommand{\MSCcodes}[1]
{
  \small	
  \textbf{\textit{MSC---}} #1
}
\title{On the homology growth and the $\ell^2$-Betti numbers of $\Out(W_n)$}
\author{Damien Gaboriau, Yassine Guerch and Camille Horbez}
\date{\today}
\begin{document}

\maketitle

\begin{abstract}
     Let $n\ge 3$, and let $\Out(W_n)$ be the outer automorphism group of a free Coxeter group $W_n$ of rank $n$. We study the growth of the dimension of the homology groups (with coefficients in any field $\mathbb{K}$) along Farber sequences of finite-index subgroups of $\Out(W_n)$. We show that, in all degrees up to $\lfloor\frac{n}{2}\rfloor-1$, these Betti numbers grow sublinearly in the index of the subgroup. When $\mathbb{K}=\mathbb{Q}$, through Lück's approximation theorem, this implies that all $\ell^2$-Betti numbers of $\Out(W_n)$ vanish up to degree $\lfloor\frac{n}{2}\rfloor-1$. In contrast, in top dimension equal to $n-2$, an argument of Gaboriau and Noûs implies that the $\ell^2$-Betti number does not vanish.
       We also prove that the torsion growth of the integral homology is sublinear. Our proof of these results relies on a recent method introduced by Abért, Bergeron, Fr{\k{a}}czyk and Gaboriau. A key ingredient is to show that a version of the complex of partial bases of $W_n$ has the homotopy type of a bouquet of spheres of dimension $\lfloor\frac{n}{2}\rfloor-1$.
\end{abstract}

\MSCcodes{57M07,20J05,20F28,20E08,20E26,06A07}

\keywords{$\ell^2$-Betti numbers, Homology torsion growth, Outer automorphisms of free Coxeter groups, Outer space, Complex of partial bases}

\section{Introduction}

The homology groups of a Riemannian manifold or CW-complex $X$ are among the most studied and powerful invariants to understand its topology. Over time, the focus has shifted from the space $X$ to its fundamental group, and group homology has become an important topic on its own. However, a precise computation of the rational homology of a group is often a much complicated task; for example these numbers have an erratic behaviour with respect to simple group-theoretic operations like passing to a finite-index subgroup. To smoothen this behaviour, instead of considering the homology of the group $G$ itself, one can study the growth of the $\mathbb{Q}$-dimension of the homology over a suitable sequence of finite-index subgroups of $G$, normalized by the index. By Lück's approximation theorem \cite{Luc}, when $G$ is of type $F_{\infty}$, these numbers converge to the $\ell^2$-Betti numbers of $G$, as defined by Atiyah \cite{Ati} and Cheeger-Gromov \cite{CG} using $\ell^2$-chains and the  notion of von Neumann dimension.

In recent work, Abért, Bergeron, Fr{\k{a}}czyk and the first named author~\cite{ABFG} have developed new tools to compute the homology growth with coefficients in any field that apply to many families of groups and many degrees. This also enables to get some control on the growth of the torsion part of the homology with $\mathbb{Z}$ coefficients. They obtained results regarding the homology growth and homology torsion growth of $\mathrm{SL}_d(\mathbb{Z})$, mapping class groups of finite-type surfaces, and many Artin groups.

In the present paper, we study these questions for the group $\Out(W_n)$ of outer automorphisms of a free Coxeter group $W_n=\mathbb{Z}/2\mathbb{Z}\ast\dots\ast\mathbb{Z}/2\mathbb{Z}$ (the free product of $n$ cyclic factors of order~$2$). This group shares several rigidity properties with $\mathrm{SL}_d(\mathbb Z)$, the mapping class groups and $\Out(F_N)$, see e.g.\ \cite{Gue3}. Our main theorem is the following.

\begin{introth}\label{theointro:main}
For every $n\ge 3$, every Farber sequence $(\Gamma_k)_{k \in \mathbb N}$ of $\Out(W_n)$, every coefficient field $\mathbb{K}$ and every $0\leq j\leq \lfloor {\frac{n}{2}}\rfloor-1$ we have:
\[\lim_{k\to \infty} \frac{\dim_{\mathbb{K}} H_j (\Gamma_k, \mathbb{K})}{[\Gamma : \Gamma_k]} = 0 \quad \mbox{and} \quad \lim_{k\to \infty} \frac{\log | H_j(\Gamma_k,\mathbb Z)_{\rm tors}|}{[\Gamma:\Gamma_k]}=0.\]
\end{introth}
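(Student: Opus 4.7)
The plan is to apply the cocompact homology-growth machinery of Abért, Bergeron, Fr{\k{a}}czyk and Gaboriau, and to induct on $n$. The natural geometric setting is the spine $K_n$ of Outer space for $W_n$: it is contractible, of dimension $n-2$, and $\Out(W_n)$ acts cocompactly with finite stabilizers, so it serves as a convenient model for $B\Out(W_n)$. However, finite stabilizers alone would only yield vanishing in degree zero through this route, which is why one enriches the picture with the complex of partial bases $\BP(W_n)$, whose cell stabilizers in $\Out(W_n)$ are, up to finite-index corrections, of the form $\Out(W_m)$ for smaller $m<n$, on which the inductive hypothesis applies.

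The central topological step is the connectivity claim announced in the abstract: a suitable variant of $\BP(W_n)$ has the homotopy type of a bouquet of spheres of dimension $\lfloor n/2\rfloor - 1$. I would approach this in the Hatcher-Vogtmann tradition, by presenting the complex as the order complex of a poset of partial bases and using a discrete Morse or ``bad simplex'' filtration argument to reduce to simpler auxiliary complexes whose connectivity is controlled by a secondary induction. The drop from the naively expected dimension $n-1$ (the analogue for the partial-basis complex of $F_n$) down to $\lfloor n/2\rfloor - 1$ reflects the fact that the order-two generators of $W_n$ come in canonical ``pairs'', effectively halving the rank available for spherical classes.

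With the spherical resolution in hand, one feeds it into the ABFG spectral-sequence argument, applied to the combined action of $\Out(W_n)$ on $K_n$ and on $\BP(W_n)$. The contractibility of $K_n$ provides a cocompact classifying space, while the spherical homotopy type of $\BP(W_n)$, together with the inductive sublinear growth for stabilizers (smaller $\Out(W_m)$ times finite groups), propagates the vanishing of normalized Betti numbers and of normalized log-torsion up to degree $\lfloor n/2\rfloor - 1$. The fact that the partial-basis complex is \emph{spherical} rather than merely $(\lfloor n/2\rfloor -2)$-connected is what pushes the vanishing range one extra degree, exactly as in the ABFG treatment of mapping class groups through the curve complex.

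The main obstacle is without doubt the homotopy-type computation for $\BP(W_n)$. The proof for the analogous complex of partial bases of $F_n$ (due to Hatcher-Vogtmann, sharpened by Day-Putman) uses intricate discrete Morse arguments; the presence of involutive generators in $W_n$ forces a genuinely new pairing step which alters the connectivity range. A secondary but nontrivial issue is to pin down the precise structure of simplex stabilizers: on a partial basis one finds not just $\Out(W_m)$ for the appropriate $m$ but a wreath-product-style correction from permuting and conjugating the chosen involutions, and one must verify that these finite corrections do not destroy the inductive vanishing.
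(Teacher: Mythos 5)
Your high-level plan — apply the ABFG homology-growth machinery to the action of $\Out(W_n)$ on a variant of the complex of partial $W_2$-bases, with the key input being its bouquet-of-spheres homotopy type — matches the paper's strategy. But the way you propose to close the argument on the stabilizer side has a genuine gap, and you also miss a crucial restriction on the complex itself.

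The stabilizer in $\Out(W_n)$ of a $j$-cell $\sigma$ of the partial $W_2$-basis complex is \emph{not} virtually a smaller $\Out(W_m)$. It is (virtually) the subgroup $\Out(W_n,\mathfrak{B})$ of outer automorphisms that preserve the free factor system $\mathfrak{B}$ attached to $\sigma$; this is substantially larger than $\Out(W_{n-2k})$ plus a finite correction, since one may also twist the $W_2$-factors by conjugation. And even if the stabilizers were of the form $\Out(W_m)$, your induction on $n$ would not be quantitatively sufficient. To run the ABFG criterion with target exponent $\alpha=\lfloor n/2\rfloor-1$, you need the stabilizer of a $j$-cell to have the cheap $(\alpha-j)$-rebuilding property; but a $j$-cell corresponds to a chain whose top element is a partial $W_2$-basis of cardinality at least $j+1$, so an induction on $n$ would only hand you the cheap $(\lfloor n/2\rfloor - j - 2)$-rebuilding property — off by one from what the criterion demands, at every dimension. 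The paper sidesteps this entirely: instead of inducting on $n$, it shows directly that $\Out(W_n,\mathfrak{B})$ has the cheap $\beta$-rebuilding property for \emph{all} $\beta$, by letting it act on the spine of Guirardel--Levitt relative Outer space $|K(W_n,\mathfrak{B})|$, which is contractible and whose cell stabilizers are (by Levitt's results) virtually $W_2^d$, hence infinite virtually abelian, hence have the cheap $\beta$-rebuilding property for all $\beta$. This is one further application of the same ABFG criterion, not an induction on $n$, and it is what makes the numerics close.

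A second issue: you work throughout with the full complex of partial $W_2$-bases $\BP_n$, but the theorem requires the subcomplex $\BP_n^*$ built from \emph{$X^*$-paired} free $W_2$-factors relative to a fixed basis $X^*$. This restriction is not cosmetic: $|\BP_4|$ is already disconnected (three components indexed by the three ways of pairing the four generators), so the full complex cannot have the required connectivity. Your proposed Hatcher--Vogtmann-style discrete Morse / bad-simplex argument is also a different route from the paper's, which follows Brück--Gupta and uses Quillen's poset fiber lemma applied to the ``visibility'' poset inside $\BP_n^*\times K_n$ (with contractibility of the visibility subspaces established through the Guirardel--Levitt morphism-folding contraction of Outer space). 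It is conceivable that a Morse-theoretic proof exists, but you would still need to restrict to $\BP_n^*$ first, and the pairing phenomenon you gesture at has to be built in from the start rather than recovered as a rank-halving side effect.
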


We refer to \cite[Definition~10.1]{ABFG} for the definition of a Farber sequence. Examples include decreasing sequences of finite-index normal subgroups with trivial intersection. Notice that $\Out(W_n)$ is residually finite \cite[Theorem~1.5]{MO}, so it admits Farber sequences.

In view of Lück's approximation theorem \cite{Luc} and $\Out(W_n)$ being of type $F_\infty$ (see Section~\ref{sect:CRP for stab and concl}), we obtain the following consequence regarding the $\ell^2$-Betti numbers of $\Out(W_n)$.

\begin{introcor}\label{corintro:main}
For every $n\geq 3$ and every $0\leq j\leq \lfloor {\frac{n}{2}}\rfloor-1$, one has $\beta^{(2)}_j(\Out(W_n))=0$.
\end{introcor}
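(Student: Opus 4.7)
The plan is to deduce Corollary~\ref{corintro:main} from Theorem~\ref{theointro:main} by applying Lück's approximation theorem, which identifies the $\ell^2$-Betti numbers of a group of type $F_\infty$ with the limits of the normalized rational Betti numbers along a Farber sequence of finite-index subgroups. Since both $\beta^{(2)}_j(\Out(W_n))$ and the validity of the approximation result depend only on the group $\Out(W_n)$, it suffices to produce a single Farber sequence to which Theorem~\ref{theointro:main} applies.

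First, I would exhibit such a sequence. By residual finiteness of $\Out(W_n)$ \cite[Theorem~1.5]{MO}, one can choose a decreasing sequence $(\Gamma_k)_{k\in\mathbb N}$ of finite-index normal subgroups with $\bigcap_k \Gamma_k = \{1\}$; this is a Farber sequence in the sense of \cite[Definition~10.1]{ABFG}. Second, I would invoke that $\Out(W_n)$ is of type $F_\infty$, which is established in Section~\ref{sect:CRP for stab and concl}, so that Lück's theorem yields
\[
\beta^{(2)}_j(\Out(W_n)) \;=\; \lim_{k\to\infty} \frac{\dim_{\mathbb{Q}} H_j(\Gamma_k,\mathbb{Q})}{[\Gamma:\Gamma_k]}
\]
for every $j\ge 0$. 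Third, I would apply Theorem~\ref{theointro:main} with $\mathbb{K}=\mathbb{Q}$: in the range $0\le j\le \lfloor n/2\rfloor-1$, the right-hand side vanishes, hence $\beta^{(2)}_j(\Out(W_n))=0$.

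At this stage there is no genuine obstacle, the entire content being concentrated in Theorem~\ref{theointro:main}. The only point that warrants mild care is aligning the Farber-sequence formalism of Theorem~\ref{theointro:main} with the hypotheses of Lück's theorem; this causes no difficulty because one is free to restrict to a decreasing chain of normal subgroups of finite index with trivial intersection, which is exactly the framework of the classical formulation.
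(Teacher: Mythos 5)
Your proposal is correct and follows essentially the same route as the paper: both deduce the vanishing of the $\ell^2$-Betti numbers from Theorem~\ref{theointro:main} with $\mathbb{K}=\mathbb{Q}$ via Lück's approximation theorem, using the $F_\infty$ property of $\Out(W_n)$ (established from the proper cocompact action on the contractible spine $|K_n|$ with finite stabilizers) and residual finiteness to supply a Farber sequence. The only minor difference is that you spell out the choice of a decreasing chain of finite-index normal subgroups with trivial intersection, whereas the paper leaves this implicit.
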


To complement Corollary~\ref{corintro:main}, let us mention that on the other hand, the $\ell^2$-Betti number in top dimension (equal to the virtual cohomological dimension of $\Out(W_n)$, i.e.\ $n-2$, see~\cite[Corollary~10.2]{KV}), does not vanish. This was essentially established by Gaboriau and Noûs \cite[Theorem~1.1]{GN}: they proved it for $\Out(F_N)$, but the same argument applies to $\Out(W_n)$, see Section~\ref{sec:top-dimension} of the present paper.

\begin{introth}
For every $n\ge 3$, one has $\beta_{n-2}^{(2)}(\Out(W_n))>0$.
\end{introth}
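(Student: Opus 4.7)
The plan is to transpose the argument of Gaboriau and Noûs \cite{GN} from $\Out(F_N)$ to $\Out(W_n)$. Three structural ingredients are needed: a contractible proper cocompact classifying space for $\Out(W_n)$ of the correct dimension $n-2$, a passage to a torsion-free finite-index subgroup, and the Gaboriau-Noûs production of a nontrivial top-dimensional $\ell^2$-homology class on the resulting finite quotient. I would begin by invoking the analog of the spine of Culler-Vogtmann Outer space for free Coxeter groups: a contractible simplicial complex on which $\Out(W_n)$ acts properly and cocompactly, whose dimension equals the virtual cohomological dimension $n-2$ recorded in \cite[Corollary~10.2]{KV}. By residual finiteness of $\Out(W_n)$ \cite{MO}, combined with a Selberg-type argument, one obtains a torsion-free finite-index subgroup $\Gamma$ acting freely and cocompactly on this spine; hence $\Gamma$ is of type $F$ with a finite classifying space of dimension $n-2$.

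Multiplicativity of $\ell^2$-Betti numbers under finite-index inclusion then reduces the problem to proving $\beta_{n-2}^{(2)}(\Gamma)>0$. At this stage I would invoke the Gaboriau-Noûs construction, which exhibits a nontrivial class in top-dimensional $\ell^2$-homology of the spine through an explicit geometric and combinatorial argument. That argument relies only on local, structural features of the action, namely the combinatorics of the fundamental domain, of the cell stabilizers, and of the top-dimensional boundary operator, and so it should transfer to the Coxeter setting once the corresponding spine and its $\Out(W_n)$-action are described explicitly.

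The main obstacle lies precisely in this verification: the spine for $W_n$ differs combinatorially from the Culler-Vogtmann spine, as it arises from systems of trees encoding free-product decompositions involving the $\mathbb{Z}/2\mathbb{Z}$ factors rather than from marked graphs. One must therefore rework the explicit descriptions of cell stabilizers, of top-dimensional simplices, and of the boundary operator in this setting, and then check that the Gaboriau-Noûs cycle still represents a nonzero $\ell^2$-class with respect to the group von Neumann algebra of $\Gamma$. This combinatorial adaptation, rather than any new $\ell^2$-theoretic input, is presumably where the bulk of the work of Section~\ref{sec:top-dimension} is concentrated.
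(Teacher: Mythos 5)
Your first ingredient (the spine $|K_n|$ as a contractible, proper, cocompact $\Out(W_n)$-complex of dimension $n-2$, matching the vcd from \cite[Corollary~10.2]{KV}) is exactly right and is where the paper's proof also starts. But you have misidentified what the Gaboriau--Noûs input actually is, and as a result your plan leaves the genuinely hard step unaddressed. You describe the GN argument as an explicit geometric construction of a top-dimensional $\ell^2$-cycle that depends on ``the combinatorics of the fundamental domain, of the cell stabilizers, and of the top-dimensional boundary operator,'' and you propose to redo this combinatorics for the Krsti\'c--Vogtmann spine. That is not what \cite{GN} provides, nor what the paper does, and carrying out such a cycle computation would be a substantial and unnecessary undertaking.

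What \cite[Theorem~1.6 and Proposition~3.1]{GN} actually gives is a soft \emph{criterion}: once a group $G$ acts properly and cocompactly on a contractible CW-complex of dimension $d$, to conclude $\beta^{(2)}_d(G)>0$ it suffices to exhibit a subgroup of $G$ of a specific algebraic form (here $F^{d-1}\rtimes F_2$ with $F$ a nontrivial finitely generated free group). No information about simplices, stabilizers, fundamental domains or boundary maps of the spine beyond its dimension and contractibility is used, and no passage to a torsion-free finite-index subgroup via Selberg is needed --- the criterion handles virtual torsion directly. The actual content of Section~\ref{sec:top-dimension} is therefore the opposite of ``combinatorial adaptation'': it is a short algebraic construction of the required subgroup inside $\Out(W_n)$, by choosing a finite-index characteristic free subgroup $F\le W_3=\langle x_1,x_2,x_3\rangle$, embedding $F^{n-3}\rtimes\Aut(W_3)$ into $\Aut(W_n)$ via partial conjugations of $x_4,\dots,x_n$ twisted by a diagonal $\Aut(W_3)$-action, and then lifting a rank-$2$ free subgroup of $\Out(W_3)$ to $\Aut(W_3)$ to land a copy of $F^{n-3}\rtimes F_2$ in $\Out(W_n)$. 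Your proposal is missing this construction entirely, which is the one nontrivial step; the part you flag as ``the bulk of the work'' (reworking cells and boundary operators) does not occur in the proof.
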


Notice that besides these results, essentially nothing is known about the homology of $\Out(W_n)$.

In the case of $\Out(F_N)$, Gaboriau and Noûs asked whether all $\ell^2$-Betti numbers up to dimension $2n-4$ (i.e.\ except for the top-dimensional one) vanish \cite[Question~1.2]{GN}. This would match a theorem of Borinsky and Vogtmann \cite[Theorem~A]{BV} asserting that the Euler characteristic of $\Out(F_N)$ is negative for $N\ge 2$. We raise the analogous questions for $\Out(W_n)$.

\begin{introquestion}
Do all $\ell^2$-Betti numbers of $\Out(W_n)$ vanish up to dimension $n-3$? 
\\
Does the sign of the Euler characteristic of $\Out(W_n)$ alternate with the parity of $n$?
\end{introquestion}

Let us now say a word about our proof of Theorem~\ref{theointro:main}. This is based on the \emph{cheap $\alpha$-rebuilding property} introduced by Abért-Bergeron-Fr{\k{a}}czyk-Gaboriau in \cite[Definition~10.6]{ABFG}, from which the conclusions of Theorem~\ref{theointro:main} follow 
in degree $j$ up to $\alpha$. The advantage of this property is that it allows for inductive arguments. The starting point is that infinite virtually abelian finitely generated groups have the cheap rebuilding property. And if a  residually finite group $G$ acts cellularly, cocompactly, without cell inversions on a CW-complex $\Omega$ which is $(\alpha-1)$-connected, and in such a way that stabilizers of cells of dimension $j\le\alpha$ all have the cheap $(\alpha-j)$-rebuilding property, then $G$ itself has the cheap $\alpha$-rebuilding property (see Theorem~\ref{Th: 10.10 of [ABFG]} below, recording \cite[Theorem~10.9]{ABFG}).

Let us now describe the $\Out(W_n)$-simplicial complex $\Omega$ to which we apply this criterion. A \emph{partial $W_2$-basis} of $W_n$ is a nonempty finite set $\{[A_1],\dots,[A_k]\}$ of conjugacy classes of subgroups of $W_n$ that are all isomorphic to the infinite dihedral group $W_2$, and such that there exist representatives $A_i$ of $[A_i]$ and a subgroup $B\subseteq W_n$ (isomorphic to $W_{n-2k}$) such that $W_n=A_1\ast\dots\ast A_k\ast B$. The \emph{complex of partial $W_2$-bases} $|\BP_n|$ is the geometric realization of the partially ordered set of all partial $W_2$-bases of $W_n$. In fact, we only work with a subcomplex of $|\BP_n|$, defined as follows. We fix once and for all a finite $n$-tuple  $X^*=(x_1^*,\dots,x_n^*)$ of order $2$ elements forming a basis of $W_n$. We consider the subcomplex $|\BP_n^*|$ spanned by all partial $W_2$-bases that only contain conjugacy classes of free factors of the form $\langle gx_{2i-1}^*g^{-1},hx_{2i}^*h^{-1}\rangle$ for some $g,h\in W_n$ and some $i\in\{1,\dots,\lfloor\frac{n}{2}\rfloor\}$. The subcomplex $\Omega:=|\BP_n^*|$ is preserved by a finite-index subgroup $\Out^0(W_n)$ of $\Out(W_n)$. 

The stabilizers for the action of $\Out(W_n)$ on $|\BP_n|$, or of $\Out^0(W_n)$ on $|\BP^*_n|$, are well-understood (see Section~\ref{sect:CRP for stab and concl}).
Notably, given a cell $\sigma$ of $|\BP_n^*|$, its stabilizer $G_\sigma$ in $\Out^0(W_n)$ has the cheap $\beta$-rebuilding property for every $\beta\in \N$.
This is because $G_\sigma$ stabilizes the conjugacy class of some system of virtually cyclic free factors. So $G_\sigma$ acts on the spine of the corresponding \emph{relative Outer space} with infinite virtually abelian stabilizers, and this spine was proved to be contractible by Guirardel and Levitt \cite[Corollary~4.4]{GL}. Thus \cite[Theorem~10.9]{ABFG} also applies to the action of $G_\sigma$ on that spine.

Our main technical contribution in the present work is therefore to compute the homotopy type of the complex $|\BP_n^*|$.

\begin{introth}\label{theointro:bp}
For every $n\ge 3$, the complex $|\BP_n^*|$ has the homotopy type of a wedge of spheres of dimension $\lfloor\frac{n}{2}\rfloor-1$.
\end{introth}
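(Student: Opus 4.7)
My plan is to prove that $|\BP_n^*|$ is Cohen--Macaulay of dimension $d := \lfloor n/2 \rfloor - 1$; combined with a classical theorem of Quillen, this will yield the wedge-of-spheres conclusion. I would argue by induction on $n$, with base cases $n=3$ and $n=4$ handled directly: for $n=3$ one has $d=0$ and $|\BP_3^*|$ is an infinite discrete set, hence a wedge of $S^0$'s; for $n=4$ one verifies combinatorially that the graph $|\BP_4^*|$ is connected with infinite first Betti number.

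The inductive step rests on two ingredients. The first is a \emph{link identification}. For a $k$-simplex $\sigma = \{[A_0],\dots,[A_k]\} \in |\BP_n^*|$ with index set $I \subseteq \{1,\dots,\lfloor n/2 \rfloor\}$ of size $k+1$, fix representatives and a free product decomposition $W_n = A_0 \ast \cdots \ast A_k \ast B_\sigma$ with $B_\sigma \cong W_{n-2(k+1)}$. I would show that $\mathrm{Link}(\sigma)$ is simplicially isomorphic to an analogous complex built on $B_\sigma$ with the basis pairs indexed by $I^c := \{1,\dots,\lfloor n/2 \rfloor\} \setminus I$. The key step is to prove that any partial $W_2$-basis of $W_n$ extending $\sigma$ admits representatives lying entirely inside $B_\sigma$, and that two such choices differ by an element of $W_n$ normalizing $B_\sigma$ up to conjugacy, so the induced $B_\sigma$-conjugacy class is well-defined; this relies on standard Kurosh-type uniqueness for complements in free product decompositions. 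By the induction hypothesis, the link is then a wedge of $(d-k-1)$-spheres, hence $(d-k-2)$-connected, which is exactly the link condition for Cohen--Macaulayness at dimension $d$.

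The second ingredient, and the main obstacle, is to show that $|\BP_n^*|$ itself is $(d-1)$-connected. I would attack this by adapting the Hatcher--Vogtmann / Day--Putman methodology to the dihedral partial-basis setting. Fix a reference top-dimensional simplex $\sigma_0$ coming from the basis $X^*$. Given a simplicial map $f : S^j \to |\BP_n^*|$ with $j \leq d - 1$, define a complexity invariant of $f$ measuring, for instance, the number of vertices of $f(S^j)$ that fail to be compatible with $\sigma_0$. Show that whenever this complexity is positive, $f$ is homotopic to a map with strictly smaller complexity by performing local moves analogous to Nielsen/Whitehead moves, using the action of the finite-index subgroup $\Out^0(W_n)$ preserving $|\BP_n^*|$. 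After finitely many steps, $f$ lands in the star of $\sigma_0$, which is a simplicial cone and hence contractible, so $f$ is nullhomotopic.

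The hardest part will be designing a complexity function that is genuinely decreased by a well-chosen family of moves while still respecting the paired structure defining $|\BP_n^*|$, and in particular staying within the subcomplex cut out by the pairing $(x_{2i-1}^*,x_{2i}^*)$. A possible alternative, which may turn out cleaner, is a discrete Morse approach: construct an acyclic matching (in Forman's sense) on the face poset of $|\BP_n^*|$ with critical cells only in dimension $d$, giving both the Cohen--Macaulay property and the wedge-of-spheres conclusion in one stroke.
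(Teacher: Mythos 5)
Your proposal takes a genuinely different route from the paper, but it has a significant gap at its core: the $(d-1)$-connectivity of $|\BP_n^*|$, where $d=\lfloor n/2\rfloor-1$.

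You split the argument into a link identification (giving the Cohen--Macaulay condition for nonempty faces) and a ``second ingredient'' showing that the whole complex is $(d-1)$-connected. But note that the Cohen--Macaulay framework does not shortcut the second ingredient: the connectivity of links of proper faces, even granting the inductive identification, gives no information about the connectivity of the complex itself (the $\sigma=\emptyset$ case). So your second ingredient \emph{is} the theorem, and for it you only sketch a strategy --- a complexity-reduction scheme in the spirit of Hatcher--Vogtmann and Day--Putman --- while explicitly conceding that ``the hardest part will be designing a complexity function.'' This is not merely a technical detail you have postponed: in the closely analogous $\Out(F_N)$ setting, precisely this Day--Putman approach to the complex of (conjugacy classes of) partial bases remains a wide-open conjecture, and the paper explicitly remarks that its own techniques do not apply there. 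A proposal that reduces the $W_n$ case to the same kind of complexity argument has not used anything specific to $W_n$ and should therefore be expected to inherit the same obstruction. What actually unlocks the $W_n$ case in the paper is an entirely different mechanism absent from your sketch: the notion of \emph{visibility} of a $W_2$-free factor in a Grushko $W_n$-tree, and a double application of Quillen's fiber lemma to the poset $\calz\subseteq\BP_n^*\times K_n$ of visible pairs, pushing the problem first into the spine $K_n$ of Outer space (where contractibility comes from the Guirardel--Levitt folding paths) and then into the finite, purely combinatorial posets $\BP^*_S$ of bases visible in a fixed tree $S$, whose homotopy type is elementary.

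Two further points. First, even granting $(d-1)$-connectivity, the statement asserts a nontrivial wedge of spheres; your proposal offers no argument that the complex is noncontractible beyond the asserted ``infinite first Betti number'' for $n=4$ and the hope of a critical cell in the discrete-Morse variant. The paper derives noncontractibility indirectly: if $|\BP_n^*|$ were contractible then the cheap-rebuilding machinery would force all $\ell^2$-Betti numbers of $\Out(W_n)$ to vanish, contradicting the nonvanishing of $\beta^{(2)}_{n-2}$. Second, your link identification is plausible (malnormality of free factors and Kurosh-type uniqueness of complements do the work you want), but you should be aware that the passage from $W_n$-conjugacy classes in $\mathrm{Link}(\sigma)$ to $B_\sigma$-conjugacy classes, and the choice of an $X^*$-paired basis adapted to a complement $B_\sigma$ that is not literally $\langle x_{2j-1}^*,x_{2j}^*: j\in I^c\rangle$, both need to be spelled out; this is where the $X^*$-pairing constraint genuinely interacts with the argument, and it is exactly the pairing that saves the day in the paper's combinatorial Lemma on $\BP^*_S$ (without the pairing, already $|\BP_4|$ is disconnected).
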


This is similar to results of Solomon-Tits~\cite{Solomon-1969} for the rational Tits building of $\mathrm{SL}_d(\mathbb Z)$ and of  Harer~\cite{Harer} for the curve complex for the mapping class groups.

There are several analogues of these complexes for $\Out(F_N)$, in particular its complex of free factors and its complex of partial bases.
And similarly for $\Out(W_n)$.
Our proof of Theorem~\ref{theointro:bp} relies on techniques developed by Quillen to study the topology of partially ordered sets \cite{Qui}, and is inspired by the work of Brück and Gupta regarding the homotopy type of the free factor complex in the $\Out(F_N)$-setting \cite{BG}.

The analogue of Theorem~\ref{theointro:main} for $\Out(F_N)$ is unknown. Indeed, the homotopy type of the complex of partial bases of the free group is still unknown, and our methods do not seem to apply there. This was conjectured by Day and Putman to be homotopy equivalent to a bouquet of spheres of dimension $N-1$ \cite[Conjecture~1.1]{DP}; this is only known to be true for the $\Aut(F_N)$-version of the complex of partial bases, where elements are not considered up to conjugation, by work of Sadofschi Costa \cite{SC}.

Notice that it is important for us to work with the complex of partial $W_2$-bases since for the free factor analogue, some cell stabilizers $G_\sigma$ have an action on their relative Outer space for which the stabilizers do not all satisfy the cheap rebuilding property.

\paragraph*{Acknowledgments.} D.G.\ was partially supported by the LABEX MILYON (ANR-10-LABX-0070) of Université de Lyon, within the program “Investissements d’Avenir” (ANR-11-IDEX-0007) operated by the French National Research Agency (ANR).
 C.H.\ acknowledges support from the Agence Nationale de la Recherche under Grant ANR22-ERCS-0011-01 Artin-Out-ME-OA, and from the European Research Council under Grant 101040507 Artin-Out-ME-OA. All three authors thank the Institut Henri Poincaré (UAR 839 CNRS-Sorbonne Université) for its hospitality and support (through LabEx CARMIN, ANR-10-LABX-59-01) during the trimester program \emph{Groups acting on fractals, Hyperbolicity and Self-similarity}.

\section{Preliminaries}

\subsection{The cheap rebuilding property and homology growth}

In this section, we review work of Abért, Bergeron, Fr{\k{a}}czyk and the first named author \cite{ABFG} which is crucial for the present paper. They introduced the notion of the \emph{cheap $\alpha$-rebuilding property} (with $\alpha\in\mathbb{N}$) for a countable group $\Gamma$. Before saying more about this property, its importance comes from the following application.

\begin{theo}[{Abért-Bergeron-Fr{\k{a}}czyk-Gaboriau \cite[Theorem~10.20]{ABFG}}]
\label{th:ABFG-10.20}
Let $\Gamma$ be a residually finite countable group. Let $\alpha\in\mathbb{N}$, and assume that $\Gamma$ is of type $F_{\alpha+1}$ and has the cheap $\alpha$-rebuilding property.

Then for every Farber sequence $(\Gamma_k)_{k \in \mathbb N}$ of $\Gamma$, every coefficient field $\mathbb{K}$, and every $j\in\{0,\dots,\alpha\}$, one has
\[\lim_{k\to \infty} \frac{\dim_{\mathbb{K}} H_j (\Gamma_k , \mathbb{K})}{[\Gamma : \Gamma_k]} = 0 \quad \mbox{and} \quad \lim_{k\to \infty} \frac{\log | H_j(\Gamma_k,\mathbb Z)_{\rm tors}|}{[\Gamma:\Gamma_k]}=0.\]
\end{theo}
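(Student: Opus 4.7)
The plan is to combine the cheap $\alpha$-rebuilding property with an equivariant Farber-style approximation argument, refining the classical strategy behind Lück's approximation theorem. Since $\Gamma$ is of type $F_{\alpha+1}$, fix a $K(\Gamma,1)$ with finite $(\alpha+1)$-skeleton $B$; its universal cover gives a free $\mathbb{Z}\Gamma$-resolution of $\mathbb{Z}$ up to degree $\alpha+1$. For each $\Gamma_k$ of finite index, the corresponding cover $B_k=\widetilde B/\Gamma_k$ has cellular chain complex with $[\Gamma:\Gamma_k]\cdot\#\{j\text{-cells of }B\}$ free generators in degree $j$. A naive count only bounds $\dim_{\mathbb K}H_j(\Gamma_k,\mathbb K)$ by the number of $j$-cells, hence by $O([\Gamma:\Gamma_k])$; this is too weak by exactly the constant one needs to beat, and the cheap rebuilding property is designed precisely to provide that arbitrarily-small constant.

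I would expect ``cheap $\alpha$-rebuilding'' to formalise the following: for every $\varepsilon>0$, the $\Gamma$-chain complex of $\widetilde B$ can be transformed, by an equivariantly-defined sequence of elementary expansions and collapses of bounded local complexity, into an $\varepsilon$-economical chain complex whose number of cells in degrees $0,\dots,\alpha$ is at most $\varepsilon\cdot\#\{\text{cells of }B\}$ per $\Gamma$-orbit, while preserving homology in these degrees, and whose boundary matrices remain uniformly bounded in $\ell^\infty$ norm with uniformly bounded row/column support. The Farber condition on $(\Gamma_k)$ (which says that the finite $\Gamma$-sets $\Gamma/\Gamma_k$ Benjamini-Schramm converge to the free $\Gamma$-action on a point) ensures that for $k$ large, the same local moves can be performed equivariantly on $B_k$ on all but an $\varepsilon$-fraction of the cells; the ``bad'' cells form a boundary layer whose density tends to $0$.

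One then reads off the two conclusions from the rebuilt complex. Over any coefficient field $\mathbb{K}$, $\dim_{\mathbb{K}} H_j(\Gamma_k,\mathbb{K})$ is bounded by the number of $j$-cells of the rebuilt complex, yielding $\dim_{\mathbb{K}} H_j(\Gamma_k,\mathbb{K})\le \varepsilon\cdot [\Gamma:\Gamma_k]$, which is the first limit. For the integral torsion, one invokes a Hadamard-type estimate of the form $\log|H_j(C_*;\mathbb{Z})_{\mathrm{tors}}|\le \mathrm{rk}(C_{j+1})\cdot \log(d\cdot\|\partial_{j+1}\|_{\infty})$ where $d$ is the maximal row/column support of $\partial_{j+1}$. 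Since in the rebuilt complex $\mathrm{rk}(C_{j+1})\le \varepsilon[\Gamma:\Gamma_k]$ and both $d$ and $\|\partial_{j+1}\|_\infty$ stay bounded by a constant depending only on $\varepsilon$ (not on $k$), this gives $\log|H_j(\Gamma_k,\mathbb Z)_{\mathrm{tors}}|\le C(\varepsilon)\cdot \varepsilon\cdot [\Gamma:\Gamma_k]$, hence the second limit after letting $\varepsilon\to 0$.

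The hardest part, and surely the heart of the ABFG argument, lies in setting up the rebuilding so that three competing requirements are met simultaneously: (i) the moves must be sufficiently local and combinatorial to transfer from $\Gamma\curvearrowright \widetilde B$ to $\Gamma_k\curvearrowright \widetilde B$ with negligible loss along any Farber sequence; (ii) the compression must actually be achievable, i.e.\ one must genuinely exhibit $\varepsilon$-economical rebuildings (this is where inductive arguments of the form recorded as \cite[Theorem~10.9]{ABFG} become essential, bootstrapping from stabilisers via cocompact actions on highly-connected complexes); and (iii) the quantitative control on boundary norms and sparsity must be preserved under these moves, so that the torsion bound does not degrade logarithmically. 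Reconciling these is what makes the definition of the cheap rebuilding property nontrivial, and it is where I would expect to spend most of the technical effort.
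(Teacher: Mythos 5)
The statement you are asked to prove is not proved in this paper at all: it is Theorem~10.20 of Abért--Bergeron--Fr\k{a}czyk--Gaboriau, and the present paper simply records it as a black box to be applied (via Theorem~\ref{theo:main}) to $\Out(W_n)$. There is therefore no proof in this paper against which to compare your attempt; the most one can do is assess whether your sketch matches the strategy of the cited source.

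Qualitatively, your reconstruction captures the right themes. The cheap $\alpha$-rebuilding property in \cite{ABFG} is indeed a statement that the equivariant chain complex of a suitable classifying space can be compressed by a chain homotopy equivalence to one with $\varepsilon\cdot[\Gamma:\Gamma_k]$ generators per degree up to $\alpha$, with uniformly bounded boundary and homotopy operators; the Farber condition is Benjamini--Schramm convergence of the coset spaces, which is what allows the rebuilding to be transferred to the finite quotients up to a sublinear error; and the torsion bound does come from a Hadamard-type determinant estimate together with control of $\ell^\infty$-norms and sparsity of the boundary matrices. Two remarks, though. First, you describe the rebuilding as ``elementary expansions and collapses of bounded local complexity,'' but ABFG work directly with chain homotopy equivalences of bounded complexity, which is a strictly more flexible (and genuinely necessary) notion than geometric expansions/collapses. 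Second, your sketch is essentially a restatement of what the property is supposed to deliver rather than a verification of the two limits; the actual content of \cite[Theorem~10.20]{ABFG}, which you correctly flag as ``the heart of the argument,'' is the quantitative passage from the rebuilding data on $\Gamma$-complexes to uniform bounds on the finite quotients over a Farber sequence, including the careful bookkeeping needed so that the norm bounds do not degrade when restricting to $\Gamma_k$. Since you admit you do not have the exact definition, this gap is expected, but it means your proposal is best read as an informed heuristic for why the theorem should be true rather than as a proof.
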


In the present paper, we will derive our main theorem (Theorem~\ref{theointro:main}) by establishing the cheap $\alpha$-rebuilding property for $\Out(W_n)$, with $\alpha=\lfloor\frac{n}{2}\rfloor-1$ (Theorem~\ref{theo:main} below). The definition of the cheap $\alpha$-rebuilding property being quite involved, we refer the reader to \cite[Definition~10.6]{ABFG}. The important point for us is that it can be established in an inductive way by letting $\Gamma$ act on a sufficiently connected simplicial complex, with simpler stabilizers that have themselves the property. The first building block for us is the following.

\begin{prop}[{Abért-Bergeron-Fr{\k{a}}czyk-Gaboriau \cite[Corollary~10.13(3)]{ABFG}}]\label{prop:abelian}
For every $m\in\mathbb{N}\setminus\{0\}$ and every $\alpha\in\mathbb{N}$, the group $\mathbb{Z}^m$ has the cheap $\alpha$-rebuilding property.
\end{prop}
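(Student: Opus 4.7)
The plan is to apply the ABFG inductive criterion (\cite[Theorem~10.9]{ABFG}, recalled later in the paper) to the standard cocompact translation action of $\mathbb{Z}^m$ on the Euclidean space $\mathbb{R}^m$, endowed with its standard cubical CW-structure.

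The verification of the hypotheses is routine: $\mathbb{Z}^m$ is residually finite and of type $F_\infty$; the cubical complex $\mathbb{R}^m$ is contractible and therefore $(\alpha-1)$-connected for every $\alpha\in\mathbb{N}$; and the translation action is cellular, cocompact (the closed unit cube providing a strict fundamental domain) and without cell inversions. The action being free, every cell stabilizer is trivial.

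With these inputs in hand, the ABFG criterion reduces the statement to the base-case assertion that the trivial group has the cheap $\beta$-rebuilding property for every $\beta\in\mathbb{N}$. This is where the main (and essentially only) subtlety lies: one must unpack \cite[Definition~10.6]{ABFG} far enough to see that for the trivial group (or any finite group) the required asymptotic rebuilding data exist trivially, since any Farber sequence of a finite group is eventually trivial and the chain complexes in the definition can then be taken to be stationary.

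An alternative strategy that avoids appealing to finite-group base cases is to induct on $m$ using the product structure $\mathbb{Z}^m=\mathbb{Z}\times\mathbb{Z}^{m-1}$: view $\mathbb{Z}^m$ as acting on $\mathbb{R}^k$ (for $1\leq k\leq m$) via projection onto $k$ coordinates, so that cell stabilizers become isomorphic to $\mathbb{Z}^{m-k}$, and invoke the inductive hypothesis together with \cite[Theorem~10.9]{ABFG} for a sufficiently large $k$ to achieve any prescribed $\alpha$. Either way, once stabilizers are controlled, the conclusion follows immediately from the inductive criterion; the real work is entirely localized in the technical base case.
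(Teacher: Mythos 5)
Your first strategy does not work, and the paper itself tells you why. The paper records, immediately after Proposition~\ref{prop:finite-index}: ``finite groups do not have the cheap $\alpha$-rebuilding property, for any $\alpha$. Indeed a residually finite group has the cheap $0$-rebuilding property if and only if it is infinite.'' So the base-case assertion your argument reduces to --- that the trivial group has the cheap $\beta$-rebuilding property for every $\beta$ --- is false. A free cocompact action (such as $\mathbb{Z}^m$ on the cubical $\mathbb{R}^m$) therefore never satisfies the stabilizer hypothesis of Theorem~\ref{Th: 10.10 of [ABFG]}: cells in dimension $j\le\alpha$ have trivial stabilizer, and the trivial group does not even have the cheap $0$-rebuilding property. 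The ``required asymptotic rebuilding data exist trivially'' intuition is precisely what the definition is engineered to rule out; roughly, the cheap $0$-rebuilding property already forces the number of vertices in the rebuilt complexes along a Farber sequence to be sublinear in the index, which fails for finite groups since the index is bounded.

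Your alternative strategy --- acting via a coordinate projection so that cell stabilizers are infinite copies of $\mathbb{Z}^{m-k}$ --- is the right kind of fix for the inductive step (already $k=1$ suffices, since $\mathbb{R}$ is contractible and the stabilizers are $\mathbb{Z}^{m-1}$). But this still does not produce a proof: the induction bottoms out at $m=1$, and for $\mathbb{Z}$ there is no cocompact cellular action on a contractible complex with infinite cell stabilizers, so Theorem~\ref{Th: 10.10 of [ABFG]} cannot be used to start the induction. The base case genuinely requires unwinding \cite[Definition~10.6]{ABFG} and constructing rebuilding data by hand (this is what \cite[Corollary~10.13]{ABFG} does, via a direct argument for infinite amenable, or at least infinite virtually abelian, residually finite groups). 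You flag this as ``the real work,'' but since it is precisely the content of the proposition, the proposal as written has a genuine gap; note also that the present paper offers no proof of this statement --- it is imported from \cite{ABFG} as a black box.
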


The following invariance property will also be useful.

\begin{prop}[{Abért-Bergeron-Fr{\k{a}}czyk-Gaboriau \cite[Corollary~10.13(1)]{ABFG}}]\label{prop:finite-index}
Let $\Gamma$ be a countable
residually finite group, let $\Gamma^0\subseteq\Gamma$ be a finite-index subgroup, and let $\alpha\in\mathbb{N}$.

Then $\Gamma$ has the cheap $\alpha$-rebuilding property if and only if $\Gamma^0$ has the cheap $\alpha$-rebuilding property.
\end{prop}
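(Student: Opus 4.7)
The plan is to prove the two implications separately, with the forward direction providing the key input for the converse. For the forward direction (from $\Gamma$ to $\Gamma^0$), I would argue directly from \cite[Definition~10.6]{ABFG}: any $\Gamma$-equivariant rebuilding data is automatically $\Gamma^0$-equivariant by restriction, and the quantitative conditions transfer because a Farber sequence of $\Gamma^0$ can be enlarged to a Farber sequence of $\Gamma$ (or conversely a Farber sequence of $\Gamma$ intersected with $\Gamma^0$ yields one for $\Gamma^0$), with the index ratios differing by the fixed factor $[\Gamma:\Gamma^0]$, which is harmlessly absorbed into the $\varepsilon$-control. I expect this to be essentially a bookkeeping verification against the definition.

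For the converse direction (from $\Gamma^0$ to $\Gamma$), I would invoke the inductive criterion of \cite[Theorem~10.9]{ABFG} for a carefully chosen finite $\Gamma$-action. Specifically, let $\Omega$ be the geometric realization of the full simplicial complex whose vertex set is the coset space $\Gamma/\Gamma^0$. Then $\Omega$ is a finite-dimensional contractible CW-complex, hence $(\alpha-1)$-connected; the $\Gamma$-action by left translation on coset labels, extended affinely, is cellular and cocompact; and after one barycentric subdivision it has no cell inversions. The pointwise stabilizer of a $j$-simplex $\{g_0\Gamma^0,\dots,g_j\Gamma^0\}$ is the intersection $\bigcap_{i=0}^{j} g_i\Gamma^0 g_i^{-1}$, a finite-index subgroup of the conjugate $g_0\Gamma^0 g_0^{-1}$ of $\Gamma^0$.

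By the forward direction (already established) applied to $\Gamma^0$ and to each such finite-index subgroup, every cell stabilizer has the cheap $\alpha$-rebuilding property, and in particular the cheap $(\alpha-j)$-rebuilding property required by the hypotheses of \cite[Theorem~10.9]{ABFG}. That theorem then concludes that $\Gamma$ has the cheap $\alpha$-rebuilding property, completing the proof. Note that this is exactly the bootstrapping scheme that the authors outline in the introduction for the much more elaborate action of $\Out(W_n)$ on $|\BP_n^*|$, here specialized to the trivial (finite, contractible) complex associated to a finite-index inclusion.

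The main obstacle is the forward direction: although morally a restriction argument, one has to unpack the quantitative definition in \cite[Definition~10.6]{ABFG} and verify that every clause survives the passage to $\Gamma^0$. In particular, the comparison between Farber sequences of $\Gamma$ and of $\Gamma^0$, and the effect of the multiplicative constant $[\Gamma:\Gamma^0]$ on the various dimensions and norms appearing in the definition of a rebuilding, must be checked not to spoil the required sublinear control.
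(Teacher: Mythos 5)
First, a point of comparison: the paper does not prove this proposition at all --- it is imported verbatim from \cite[Corollary~10.13(1)]{ABFG} --- so there is no in-paper argument to measure your proposal against; what you are really doing is reconstructing a proof of ABFG's result, and it is the reconstruction itself that has a gap. Your forward direction is morally right but misdescribed: \cite[Definition~10.6]{ABFG} does not quantify over Farber sequences (those only enter in the application, Theorem~\ref{th:ABFG-10.20}), and your auxiliary claim that a Farber sequence of $\Gamma^0$ can be enlarged to one of $\Gamma$ is false in general. The correct bookkeeping is simply: the same witnessing complex restricts to a $\Gamma^0$-complex, finite-index subgroups of $\Gamma^0$ are finite-index subgroups of $\Gamma$, one intersects the threshold subgroup with $\Gamma^0$, and one rescales $\varepsilon$ by the fixed factor $[\Gamma:\Gamma^0]$. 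This part is fixable.

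The genuine gap is in the converse direction, where your argument is circular. To meet the hypothesis of Theorem~\ref{Th: 10.10 of [ABFG]} that setwise and pointwise cell stabilizers coincide, you pass to the barycentric subdivision of the full simplex on $\Gamma/\Gamma^0$; but then you compute stabilizers as if you were still on the unsubdivided simplex. A cell of the subdivision is a flag of subsets of $\Gamma/\Gamma^0$, and its stabilizer is the intersection of the \emph{setwise} stabilizers of those subsets; this only contains $\bigcap_i g_i\Gamma^0 g_i^{-1}$ with finite index and is in general not contained in any conjugate of $\Gamma^0$. The extreme case makes the circularity explicit: the barycenter of the top simplex is a vertex of the subdivision whose stabilizer is all of $\Gamma$, so condition (3) of Theorem~\ref{Th: 10.10 of [ABFG]} for that vertex is literally the statement you are trying to prove; for the other flag stabilizers you would need the implication ``finite-index subgroup has the property $\Rightarrow$ the group does,'' i.e.\ again the converse direction. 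A standard repair is to replace the simplex by the $m$-fold join $(\Gamma/\Gamma^0)^{*m}$ with $m\ge\alpha+1$: it is $(\alpha-1)$-connected and cocompact, a setwise stabilizer automatically fixes each vertex (the copies of $\Gamma/\Gamma^0$ are $\Gamma$-invariant and a cell meets each copy at most once), and every cell stabilizer is an intersection of conjugates of $\Gamma^0$, hence has the cheap $(\alpha-j)$-rebuilding property by your forward direction together with monotonicity of the property in $\alpha$ --- a monotonicity you also use implicitly (``in particular the cheap $(\alpha-j)$-rebuilding property'') and which should be quoted from \cite{ABFG} rather than taken for granted.
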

It is worth noticing that finite groups do not have the cheap $\alpha$-rebuilding property, for any $\alpha$. Indeed a residually finite group has the cheap $0$-rebuilding property if and only if it is infinite. 

From these initial building blocks, the cheap $\alpha$-rebuilding property can be proved inductively using the following theorem.

\begin{theo}[{Abért-Bergeron-Fr{\k{a}}czyk-Gaboriau \cite[Theorem~10.9]{ABFG}}]
\label{Th: 10.10 of [ABFG]}
Let $\Gamma$ be a residually finite countable group. Let $\Omega$ be a CW-complex on which $\Gamma$ acts cellularly, in such a way that the pointwise stabilizer of any cell $\omega\in\Omega$ coincides with its setwise stabilizer. Let $\alpha\in\mathbb{N}$, and assume that the following conditions hold:
\begin{enumerate}
    \item $\Gamma\backslash\Omega$ has finite $\alpha$-skeleton;
    \item \label{item: Omega alpha-1-connected} $\Omega$ is $(\alpha-1)$-connected;
    \item for every $j\le\alpha$, the $\Gamma$-stabilizer of any cell of $\Omega$ of dimension $j$ has the cheap $(\alpha-j)$-rebuilding property.
\end{enumerate}
Then $\Gamma$ has the cheap $\alpha$-rebuilding property.
\end{theo}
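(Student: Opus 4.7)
The plan is to transfer the cheap rebuilding property from the cell stabilizers up to $\Gamma$ by assembling their equivariant thin models along the cell structure of $\Omega$. Morally, the cheap $\alpha$-rebuilding property for a group $G$ produces, for every $\varepsilon>0$, an equivariant CW-approximation (up to dimension $\alpha$) of a classifying space whose cells in dimensions $\le\alpha$ have ``cost'' at most $\varepsilon$ per unit covolume (measured against a Farber sequence via a trace). The goal is therefore to combine such thin models for each $\Gamma_\sigma$ into one for $\Gamma$, with $\Omega$ serving as the scaffold.

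The construction I would use is an equivariant Borel-type assembly. For each orbit representative $\sigma$ of a cell of dimension $j\le\alpha$, hypothesis~(3) gives a $\Gamma_\sigma$-equivariant thin model $X_\sigma$ of dimension $\alpha-j$ of total cost at most $\varepsilon$. I would then form the equivariant homotopy colimit
\[
\widetilde\Omega \;=\; \mathrm{hocolim}_{\sigma\in\Omega}\bigl(\Gamma\times_{\Gamma_\sigma}X_\sigma\bigr),
\]
glued along the face incidences of $\Omega$. The hypothesis that pointwise and setwise stabilizers coincide makes each piece an honest $\Gamma_\sigma$-CW-complex, so the quotient is a bona fide $\Gamma$-CW-complex. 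A cell of $\widetilde\Omega$ of dimension $\le\alpha$ decomposes as a pair $(\sigma,c)$ with $\dim\sigma+\dim c\le\alpha$, matching exactly the budget $\dim X_\sigma\le\alpha-\dim\sigma$; and finiteness of the $\alpha$-skeleton of $\Gamma\backslash\Omega$ (hypothesis~(1)) bounds the number of orbit types that contribute.

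Connectedness is then checked via the standard equivariant spectral sequence with $E^1$-page given by chains on $\Omega$ with coefficients in the homology of the $X_\sigma$: the $(\alpha-1)$-connectedness of $\Omega$ (hypothesis~(2)) kills the horizontal direction up to $p=\alpha-1$, while each $X_\sigma$ being an $(\alpha-j)$-rebuilding of $\Gamma_\sigma$ kills its vertical direction up to $q=\alpha-j$; a diagonal check then gives the required $(\alpha-1)$-connectedness of $\widetilde\Omega$ as an approximation of $E\Gamma$ in degrees $\le\alpha$. The total cost is additive over orbit types, bounded by a finite sum of $\varepsilon$'s which one renormalizes by choosing each stabilizer model fine enough. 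The main obstacle, and where the real work lies, is the bookkeeping at the level of the precise analytic definition in \cite[Definition~10.6]{ABFG}: one must verify that the cost functional is genuinely additive under this kind of equivariant gluing, that the Farber property passes to the actions of the $\Gamma_k\cap\Gamma_\sigma$ on the $X_\sigma$, and that the von Neumann traces normalize correctly to yield a uniformly small $\varepsilon'$ for $\Gamma$. This essentially forces one to work entirely at the level of equivariant chain complexes and operator-algebraic traces rather than with topological spaces, which is why the argument in \cite[Theorem~10.9]{ABFG} is substantially more delicate than the topological cartoon above suggests.
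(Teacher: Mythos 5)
The paper treats this theorem as a black box, citing \cite[Theorem~10.9]{ABFG}, and gives no proof of it; so there is no in-source argument to compare your sketch against. Read on its own terms, your sketch captures the right topological cartoon of the ABFG argument: build an approximation to a classifying space for $\Gamma$ by Borel-assembling the stabilizer data over the cell structure of $\Omega$, use the connectivity of $\Omega$ together with the rebuildability of the stabilizers to control the homology of the assembly, and use the finiteness of the $\alpha$-skeleton of $\Gamma\backslash\Omega$ to bound the total size. You also correctly, and honestly, flag where the real work lies.

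The genuine gap is exactly where you put it, but it is the content of the theorem, not a footnote. The cheap $\alpha$-rebuilding property (\cite[Definition~10.6]{ABFG}) is not the existence of a single thin $\Gamma_\sigma$-space $X_\sigma$ of dimension $\alpha-j$: it is a quantitative statement, formulated at the level of chain complexes, over an entire Farber sequence --- for each finite-index subgroup $\Gamma_k$ in the sequence and each $\varepsilon>0$, one must produce a chain homotopy equivalence between the restricted cellular chain complex and a much smaller free complex, with explicit bounds on the ranks of the small complex in degrees $\le\alpha$ and on the norms of the chain maps and homotopies, uniformly in $k$. Your $\mathrm{hocolim}$ is therefore shorthand for a double complex whose fibers vary with $k$; and the three claims you relegate to ``bookkeeping'' --- that cost is additive under this equivariant gluing, that the Farber property descends to the subgroups $\Gamma_k\cap\Gamma_\sigma$, and that the glued homotopies retain uniform norm bounds across face maps --- are each nontrivial lemmas. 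A further soft spot: the spectral-sequence justification of the $(\alpha-1)$-connectedness of the assembly has to be reformulated as a filtration argument on the glued chain complex (the inputs are rebuildings, not connectivity of genuine spaces), and the boundary case $j=\alpha$, where the stabilizer is only required to have the cheap $0$-rebuilding property, i.e.\ to be infinite, must be handled on its own. None of this is fatal, and it is precisely what \cite[Section~10]{ABFG} carries out; but as written your proposal is a plan, not a proof.
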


In the case $\alpha=0$, the assumption (2) in the above theorem is just that $\Omega$ is non-empty. 

Our goal in the present paper will be to establish the cheap $\alpha$-rebuilding property for $\Out(W_n)$ by studying an appropriate action of (some finite-index subgroup of) $\Out(W_n)$ on an appropriate simplicial complex, namely the complex $\BP_n^*$ introduced in Section~\ref{sec:bp} below. The value $\alpha=\lfloor \frac{n}{2}\rfloor-1$ we will obtain relies on the connectivity properties of $\BP_n^*$ (see Theorem~\ref{theo:bp}) together with 
the restriction given by the assumption \eqref{item: Omega alpha-1-connected} in Theorem~\ref{Th: 10.10 of [ABFG]}.

\subsection{\texorpdfstring{The Outer space of $W_n$}{The Outer space of Wn}}

The \emph{Outer space} was first introduced by Culler and Vogtmann \cite{CV} in the context of free groups, and later extended to free products by Guirardel and Levitt \cite{GL}.

A \emph{simplicial Grushko $W_n$-tree} is a simplicial minimal $W_n$-tree $S$ with trivial edge stabilizers in which every nontrivial vertex stabilizer is isomorphic to $\mathbb{Z} /2\mathbb{Z}$ (here \emph{minimal} means that $S$ does not contain any proper nonempty $W_n$-invariant subtree). A \emph{metric Grushko $W_n$-tree} is the metric space obtained from a simplicial Grushko $W_n$-tree $S$ by making every edge of $S$ isometric to an interval of $\mathbb{R}$ of positive length (with edge lengths being $W_n$-invariant), and equipping $S$ with the induced path metric.

Given a metric Grushko $W_n$-tree $T$ and a subgroup $A\subseteq W_n$, there is a unique minimal nonempty $A$-invariant subtree in $T$, which we denote by $T_A$.

The \emph{unprojectivized Outer space} $\mathcal{O}$ of $W_n$ is the space of all equivariant isometry classes of metric Grushko $W_n$-trees. The \emph{projectivized Outer space} $\mathbb{P}\mathcal{O}$ is the space of all equivariant homothety classes of metric Grushko $W_n$-trees (i.e.\ two trees $T,T'$ are identified in $\mathbb{P}\mathcal{O}$ if there exists a $W_n$-equivariant map $T\to T'$ that multiplies all distances by the same positive factor).

The projectivized Outer space $\mathbb{P}\mathcal{O}$ has naturally the structure of a simplicial complex with missing faces, where open simplices are obtained by considering all projective metric trees obtained from a given simplicial Grushko $W_n$-tree by varying edge lengths, keeping them positive. Faces of a simplex are obtained by allowing some edge lengths to degenerate to $0$. 
The missing faces correspond to degeneracies that merge together some vertices with stabilizer $\mathbb{Z} /2\mathbb{Z}$.
As all vertex stabilizers in Grushko $W_n$-trees are finite, the simplicial structure on $\mathbb{P}\mathcal{O}$ is locally finite. The preimage of a simplex of $\mathbb{P}\mathcal{O}$ in $\mathcal{O}$ is a cone.

The space $\mathbb{P}\mathcal{O}$ has several natural topologies described below, which turn out to all be equivalent because the simplicial structure on $\mathbb{P}\mathcal{O}$ is locally finite, see e.g.\ \cite[Proposition~5.4]{GL2}. We will use two of them. First, the \emph{weak topology} is the one where a subset is open if its intersection with every open simplex is open. Second, the \emph{equivariant Gromov-Hausdorff topology} on $\mathcal{O}$ was introduced by Paulin \cite{Pau} as the topology for which a basis of open neighborhoods of a $W_n$-tree $T$ is given by the sets $V_T(K,F,\varepsilon)$, where $K\subseteq T$ and $F\subseteq W_n$ are finite subsets, and $\varepsilon>0$, defined as follows: $V_T(K,F,\varepsilon)$ consists of all trees $T'$ for which there exists a map $\theta:K\to T'$ such that for all $x,y\in K$ and every $g\in F$, one has $|d_{T'}(\theta(x),g\theta(y))-d_T(x,gy)|<\varepsilon$. Informally, this means that two trees are close if any finite subtree $\tau$ in one can be approximated by a finite subtree $\tau'$ in the other, in such a way that the actions of large finite subsets of $W_n$ on $\tau$ and $\tau'$ almost coincide through this approximation. The space $\mathbb{P}\mathcal{O}$ is then equipped with the quotient topology, which is also called the equivariant Gromov-Hausdorff topology. Another commonly used topology (though we will not need it in the present paper) is the \emph{axes topology}, which was shown by Paulin to coincide with the equivariant Gromov-Hausdorff topology \cite{Pau2}. Equipped with any of these topologies, the spaces $\calo$ and $\mathbb{P}\calo$ are contractible \cite{CV,GL}.

A \emph{morphism} between two metric Grushko $W_n$-trees $T,T'$ is a $W_n$-equivariant map $f:T\to T'$ such that every segment of $T$ can be written as a union of finitely many subsegments, each of which is sent isometrically to $T'$.  There is also an equivariant Gromov-Hausdorff topology on the set $\mathcal{M}$ of morphisms between trees in $\calo$, which was introduced by Guirardel and Levitt in \cite[Section~3.2]{GL}. We will only use it in our proof of Lemma~\ref{lem:visible-contractible}. In this topology, a basis of open neighborhoods of a morphism $f:S\to T$ is given by the sets $V_f(K,F,\varepsilon)$, where $K\subseteq S$ and $F\subseteq W_n$ are finite subsets, and $\varepsilon>0$, defined as follows: $V_f(K,F,\varepsilon)$ consists of all morphisms $f':S'\to T'$ between metric Grushko $W_n$-trees for which there exists a map $\theta:K\to S'$ such that for all $x,y\in K$ and every $g\in F$, one has \[|d_{S'}(\theta(x),g\theta(y))-d_S(x,gy)|<\varepsilon~\text{~~~and~~~}|d_{T'}(f'(\theta(x)),f'(g\theta(y)))-d_T(f(x),f(gy))|<\varepsilon.\] The source and range maps are continuous.  

The set of all simplicial Grushko $W_n$-trees is equipped with a partial order, where $S\le T$ if $T$ collapses onto $S$, i.e.\ if $S$ is obtained from $T$ by collapsing to a point each connected component of a given $W_n$-invariant subset. The \emph{spine of Outer space} $|K_n|$ is the simplicial complex defined as the geometric realization of the partially ordered set $K_n$ of Grushko $W_n$-trees \cite{CV,GL}. It naturally embeds in $\mathbb{P}\calo$ as the barycentric spine of its simplicial structure with missing faces: a vertex of $|K_n|$ (corresponding to a simplicial tree $S$) is sent to the barycenter of the simplex of $S$ in $\mathbb{P}\calo$. There is also a deformation retraction $\pi:\mathbb{P}\mathcal{O}\to |K_n|$ with the property that $\pi([T])$ always belongs to the open simplex of $[T]$. In particular $|K_n|$ is also contractible, like $\calo$ and $\mathbb{P}\calo$.

 There is also a relative version of Outer space, introduced by Guirardel and Levitt in \cite[Section~4]{GL}. We will only use it in the proof of Proposition~\ref{prop:stabilizers}. A \emph{free factor system} $\calf$ of $W_n$ is a set $\{[A_1],\dots,[A_k]\}$ of conjugacy classes of subgroups of $W_n$ such that there exist representatives $A_1,\dots,A_k$ and a subgroup $B\subseteq W_n$ such that $W_n=A_1\ast\dots\ast A_k\ast B$. A \emph{metric Grushko $(W_n,\calf)$-tree} is a minimal metric simplicial $W_n$-tree with trivial arc stabilizers, in which all $A_i$ fix a point $v_i$, and all infinite point stabilizers are conjugate to one of the subgroups $A_i$. The \emph{relative Outer space} $\calo(W_n,\calf)$ is the space of all equivariant isometry classes of metric Grushko $(W_n,\calf)$-trees. We denote by $\mathbb{P}\calo(W_n,\calf)$ its projectivized version, where trees are considered up to equivariant homothety instead of isometry. Again $\mathbb{P}\calo(W_n,\calf)$ has a natural structure of a simplicial complex with missing faces. When equipped with the weak topology, it retracts by deformation to its \emph{spine} $|K(W_n,\calf)|$, defined as above using Grushko $(W_n,\calf)$-trees instead of Grushko $W_n$-trees. The space $\mathbb{P}\calo(W_n,\calf)$, equipped with the weak topology, is contractible \cite[Corollary~4.4]{GL}, and therefore so is $|K(W_n,\calf)|$. Although we will not use this fact, let us mention that $\mathbb{P}\calo$ is also contractible in the equivariant Gromov--Hausdorff topology \cite[Theorem~4.2]{GL}: this topology is still equivalent to the axes topology, but is no longer equivalent to the weak topology when some factors in $\calf$ are infinite.

\section{\texorpdfstring{Homotopy type of the complex of partial $W_2$-bases}{Homotopy type of the complex of partial W2 bases}}\label{sec:bp}

We recall that a subgroup $A\subseteq W_n$ is a \emph{free factor} if there exists a subgroup $B\subseteq W_n$ such that $W_n=A\ast B$. A \emph{free $W_2$-factor} is a free factor of $W_n$ isomorphic to $W_2$. A \emph{partial $W_2$-basis} is a free factor system consisting of conjugacy classes of free $W_2$-factors.

The set $\BP_n$ of all partial  $W_2$-bases of $W_n$ is ordered by inclusion. The \emph{complex of partial $W_2$-bases}, denoted by $|\BP_n|$, is the geometric realization of this partially ordered set. It has dimension 
$\lfloor \frac{n}{2}\rfloor-1$. The group $\Out(W_n)$ acts on $|\BP_n|$ by simplicial automorphisms.

We denote by $\Out^0(W_n)$ the finite-index subgroup of $\Out(W_n)$ consisting of all automorphisms that preserve the conjugacy class of each element of order $2$ (as opposed to permuting them). 

We now choose once and for all a finite $n$-tuple  $X^*=(x_1^*,\dots,x_n^*)$ of order $2$ elements forming a basis of $W_n$. We associate them pairwise; more precisely, we say that a free $W_2$-factor of $W_n$ is \emph{$X^*$-paired} if it is of the form $\langle gx_{2i-1}^*g^{-1},hx_{2i}^*h^{-1}\rangle$ for some $g,h\in W_n$ and some $i\in\{1,\dots,\lfloor\frac{n}{2}\rfloor\}$. 
We define $\BP_n^*$ as the subset of $\BP_n$ whose elements are the partial $W_2$-bases consisting only of conjugacy classes of $X^*$-paired free $W_2$-factors, equipped with the subposet structure. Its geometric realization $|\BP_n^*|$ (a subcomplex of $|\BP_n|$) has dimension $\lfloor \frac{n}{2}\rfloor-1$. Notice that the finite-index subgroup $\Out^0(W_n)$ preserves the subcomplex $|\BP_n^*|$. This action is not free. The stabilizers shall be explained later on in Section~\ref{sect:CRP for stab and concl}.

\begin{theo}\label{theo:bp}
For every $n\ge 4$, the complex $|\BP_n^*|$ 
is $\left(\lfloor \frac{n}{2}\rfloor-2\right)$-connected.
\end{theo}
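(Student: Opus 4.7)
My plan is to prove Theorem~\ref{theo:bp} by induction on $n$, setting $m := \lfloor n/2 \rfloor$ so the claim becomes $(m-2)$-connectedness of $|\BP_n^*|$. For the base cases $n \in \{4, 5\}$, where $m = 2$ and the statement reduces to connectedness, I would verify directly that the associated bipartite graph---whose vertex set is partitioned into type-$1$ and type-$2$ $X^*$-paired factors---is connected. This should follow from explicit Nielsen-type transformations in $W_n$, together with the transitivity of a suitable subgroup of $\Out^0(W_n)$ on appropriate configurations of $X^*$-paired factors.

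For the inductive step, assume the theorem holds for ranks strictly less than $n$. The first key reduction is to identify the link $\mathrm{lk}(v_0)$ of the distinguished vertex $v_0 := [\langle x_1^*, x_2^*\rangle]$ in $|\BP_n^*|$ with $|\BP_{n-2}^*|$, the analogous complex for the subgroup $B := \langle x_3^*, \ldots, x_n^*\rangle \cong W_{n-2}$ equipped with its inherited $X^*$-pairing. Concretely, any partial $W_2$-basis $\sigma \cup \{v_0\}$ of $W_n$ gives rise, after suitable conjugation within the chosen complement to $\langle x_1^*, x_2^*\rangle$, to an $X^*$-paired partial $W_2$-basis of $B$, and the correspondence is compatible with the simplicial structure. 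By the inductive hypothesis, $\mathrm{lk}(v_0)$ is $(m-3)$-connected. Since $\mathrm{st}(v_0)$ is a cone, hence contractible, and $|\BP_n^*| = \mathrm{st}(v_0) \cup Y$ with $\mathrm{st}(v_0) \cap Y = \mathrm{lk}(v_0)$, where $Y$ is the subcomplex of simplices of $|\BP_n^*|$ not containing $v_0$, a Mayer--Vietoris / Seifert--van Kampen argument reduces the theorem to showing that $Y$ is $(m-2)$-connected.

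To analyze $Y$, I would follow the Brück--Gupta strategy using Quillen's fiber lemma. The plan is to construct an order-preserving map $\Phi : \BP_n^* \to Q$ to a suitably chosen auxiliary poset $Q$---for instance the poset of non-empty subsets of $\{1, \ldots, m\}$ recording the types occurring in a partial basis, or a more refined variant---and to show that the subposets $\Phi^{-1}(Q_{\le q})$ are, up to homotopy, joins or products of lower-rank complexes of the form $|\BP_{n'}^*|$ that are sufficiently connected by the inductive hypothesis. An alternative route, also in the Brück--Gupta spirit, is to iterate the link-of-a-vertex reduction across all type-$1$ vertices, combining the resulting data via a nerve-theorem / colimit argument and exploiting the transitive action of a suitable finite-index subgroup of $\Out^0(W_n)$ on type-$1$ vertices.

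The main obstacle I anticipate is precisely the analysis of $Y$ (equivalently, the connectivity of the fibers of $\Phi$). In the $\Out(F_N)$-setting, Brück--Gupta leverage Whitehead moves and peak reduction to bring a partial basis into a standard form; transposing this to the $W_n$-setting requires analogous moves compatible with the $X^*$-pairing, and a verification that any partial $W_2$-basis of a given type profile can be simultaneously conjugated into a standard position inside a chosen complementary subgroup of $W_n$. The combinatorial bookkeeping of the $X^*$-pairing throughout these simultaneous conjugations, and the identification of the fiber poset with a suitable lower-rank complex, is where the main technical work will be concentrated.
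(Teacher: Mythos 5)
Your proposal is fundamentally different from the paper's proof, and as written it has genuine gaps that go beyond ``technical bookkeeping.''

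The paper does not induct on $n$ at all. Its strategy is a double fibration through the spine of Outer space. Define $\calz\subseteq\BP_n^*\times K_n$ to be the poset of pairs $(B,S)$ with the partial $W_2$-basis $B$ ``visible'' in the Grushko tree $S$ (visibility means the minimal $A$-subtree maps injectively to the quotient, for each $A$ in $B$). The first projection $\pi_1:\calz\to\BP_n^*$ has fibers that retract onto the visibility space $\calv^K_B\subseteq|K_n|$, which is shown to be contractible by pushing the Guirardel--Levitt folding deformation retraction through the visibility constraint; Quillen's fiber lemma then gives $|\calz|\simeq|\BP_n^*|$. The second projection $\pi_2:\calz\to K_n$ has target $|K_n|$ contractible, and the fiber over a fixed tree $S$ retracts onto $|\BP^*_S|$, the complex of partial $W_2$-bases visible in $S$. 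Crucially, only finitely many such bases are visible in any fixed $S$, and the pairing built into $\BP_n^*$ makes $\BP_S^*$ isomorphic to an explicit ``chessboard-type'' poset (subsets of $\coprod X_i$ with at most one element in each $X_i$), which is a wedge of top-dimensional spheres by an elementary induction on the number of blocks. None of the ingredients you propose---link decompositions, Whitehead moves, peak reduction, a type-profile poset---appear.

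The main concrete problems with your plan are the following. First, the identification $\mathrm{lk}(v_0)\cong|\BP_{n-2}^*|$ for $v_0=[\langle x_1^*,x_2^*\rangle]$ is not established and is genuinely delicate: a vertex of $\mathrm{lk}(v_0)$ is a $W_n$-conjugacy class of free $W_2$-factors admitting a representative inside \emph{some} complement of $\langle x_1^*,x_2^*\rangle$, but different partial bases may realize different complements (complements of a given free factor are not unique up to conjugacy), and even once you have conjugated a representative into $B=\langle x_3^*,\dots,x_n^*\rangle$, you must check that $W_n$-conjugacy between subgroups of $B$ implies $B$-conjugacy. You would need a precise Kurosh/Bass--Serre argument here, and you have not given one. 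Second, and more seriously, even granting the link identification, the star of $v_0$ is contractible and the link is $(m-3)$-connected, but the Mayer--Vietoris/van~Kampen decomposition only transfers this to $|\BP_n^*|$ if you can control the complementary piece $Y$; you explicitly defer this, and your proposal for $Y$ via a type-profile map $\Phi:\BP_n^*\to Q$ is circular, since the fibers $\Phi^{-1}(Q_{\le q})$ are again complexes of $\BP^*$-type whose connectivity is exactly what you are trying to prove. The paper escapes this circularity precisely by passing through $K_n$: fixing a tree $S$ reduces the fiber to a \emph{finite} combinatorial poset with an explicit product structure, and it is there---not at the level of all of $\BP_n^*$---that the elementary wedge-of-spheres computation takes place. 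Without some analogue of that finiteness, your approach stalls exactly where you say it does.
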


\begin{Remk}\label{Rem: Hurewicz and Whitehead}
Using classical theorems of Hurewicz and Whitehead (as recorded e.g.\ in \cite[Remark~2.9]{BG}), it follows since $|\BP_n^*|$ has dimension $\lfloor \frac{n}{2}\rfloor-1$ that $|\BP_n^*|$ is either homotopy equivalent to a nontrivial bouquet of spheres of dimension $\lfloor \frac{n}{2}\rfloor-1$ or else is contractible. We will see later on (Theorem~\ref{Th. homotopy type} below) that for $n\ge 3$, the complex $|\BP_n^*|$ is indeed noncontractible. For $n=3$, the complex $|\BP^*_3|$ consists of a countable set of isolated points, including all conjugacy classes of free $W_2$-factors of the form $\langle x_1^*, x_3^*(x_1^*x_3^*)^nx_2^*(x_3^*x_1^*)^nx_3^*\rangle$.
\end{Remk}

\begin{Remk}
Contrarily to $|\BP_n^*|$, we expect the complex $|\BP_n|$ not to be $\left(\lfloor \frac{n}{2}\rfloor-2\right)$-connected in general. For instance $|\BP_4|$ is not connected: it has exactly three connected components corresponding to pairing basis elements of $W_n$ according to $X_1^*=(x_1^*,x_2^*,x_3^*,x_4^*)$, to $X_2^*=(x_1^*,x_3^*,x_2^*,x_4^*)$, or to $X_3^*=(x_1^*,x_4^*,x_2^*,x_3^*)$.
This led us to consider $|\BP_n^*|$ instead of $|\BP_n|$. The only place in the proof where the distinction between $|\BP_n|$ and $|\BP_n^*|$ is crucial is Lemma~\ref{lemma:homotopy-type-BP-S}.
\end{Remk}

The general strategy of our proof of Theorem~\ref{theo:bp} is inspired by work of Brück and Gupta \cite[Section~7]{BG}. In order to analyze the homotopy type of $|\BP_n^*|$, we will make use of the following lemmas due to Quillen, applied to well-chosen complexes that will be introduced below.

 Let $Q$ be a poset, we denote its geometric realization by $\vert Q\vert$. 
If $\sigma\in Q$, we consider the following subsets  $Q_{\le \sigma}:= \{\tau\in Q\colon \tau\leq \sigma\}$ and
 $Q_{\ge \sigma} := \{\tau\in Q\colon \tau\ge \sigma\}$.
 
\begin{lemma}[{Quillen \cite[Propositions~1.6 and~7.6]{Qui}}]\label{lemma:quillen}
Let $P,Q$ be posets, and let $f:P\to Q$ be a poset map. 
\begin{enumerate}
\item \label{Quillema contract} Assume that for every $\sigma\in Q$, the geometric realization of the subspace $f^{-1}(Q_{\le \sigma})$ is contractible. Then $f$ induces a homotopy equivalence between $|P|$ and $|Q|$.
\item \label{Quillema m-connect} Let $m\in\mathbb{N}$. Assume that for every $\sigma\in Q$, the geometric realization of $f^{-1}(Q_{\le \sigma})$ is $m$-connected. Then $|P|$ is $m$-connected if and only if $|Q|$ is $m$-connected.
The same conclusion holds if $Q_{\le \sigma}$ is replaced by $Q_{\ge \sigma}$ in the assumption.
\end{enumerate}
\end{lemma}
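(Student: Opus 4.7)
The plan is to reduce both parts to Quillen's Theorem~A by means of the \emph{non-Hausdorff mapping cylinder} construction. Given $f\colon P\to Q$, one forms a poset $M(f)$ on the disjoint union $P\sqcup Q$ whose partial order extends those of $P$ and $Q$ by declaring $p\leq q$ (for $p\in P$ and $q\in Q$) whenever $f(p)\leq q$ in $Q$. The natural poset inclusions $i\colon P\hookrightarrow M(f)$ and $j\colon Q\hookrightarrow M(f)$ realize $|P|$ and $|Q|$ as subcomplexes of $|M(f)|$, and establishing the desired homotopy equivalence reduces to showing that both subcomplexes are deformation retracts of $|M(f)|$.

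First I would observe that $|j|$ is \emph{always} a homotopy equivalence, with no hypothesis on $f$. The poset map $r\colon M(f)\to Q$ given by $r(p)=f(p)$ and $r(q)=q$ splits $j$, and for every $q\in Q$ the preimage $r^{-1}(Q_{\leq q})=f^{-1}(Q_{\leq q})\cup Q_{\leq q}$ has $q$ as a maximum and hence contractible realization. A skeleton-by-skeleton induction on $|Q|$, combined with the gluing lemma for homotopy equivalences, then shows that $|r|$ (and hence $|j|$) is a homotopy equivalence.

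The core of part~(1) is the parallel fact that $|i|$ is a homotopy equivalence under the contractibility assumption. For each $q\in Q$, the subposet of elements of $P$ lying $\leq q$ in $M(f)$ is exactly $f^{-1}(Q_{\leq q})$, which is contractible by hypothesis. One can then deformation retract $|M(f)|$ onto $|P|$ by removing the vertices of $Q$ one after another in an order compatible with the partial order on $Q$; each such removal deletes an open star whose link from below is one of these contractible subcomplexes, so is itself a homotopy equivalence by the standard cone argument. Part~(2) proceeds identically with contractibility replaced by $m$-connectivity: each individual removal induces an isomorphism on $\pi_k$ for $k\leq m$ by homotopy excision, so the inclusions $|P|\hookrightarrow|M(f)|\hookleftarrow|Q|$ are $m$-connected maps, and $m$-connectivity is therefore shared by the three spaces.

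The symmetric statement with $Q_{\geq\sigma}$ follows by applying the argument to the opposite posets $P^{\mathrm{op}}$ and $Q^{\mathrm{op}}$, whose geometric realizations are unchanged. The main technical obstacle is to make the iterative retraction onto $|P|$ fully rigorous when $Q$ has infinite chains; the cleanest way is to abstract the argument via Quillen's Theorem~A for small categories applied to $i\colon P\hookrightarrow M(f)$, whose left comma categories over $q\in Q$ coincide with $f^{-1}(Q_{\leq q})$.
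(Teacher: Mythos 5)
The paper does not prove this lemma --- it simply cites Quillen \cite[Propositions~1.6 and~7.6]{Qui} --- so there is no internal argument to compare against; your proposal has to stand on its own. The non-Hausdorff mapping-cylinder construction $M(f)$ is a known and legitimate route to Theorem~A for posets, and the skeleton of your argument is right, but the key step has a gap as written. Removing a vertex $q$ from the order complex of a poset $R$ preserves the homotopy type when the \emph{full} link of $q$ is contractible, and that link is the join $|R_{<q}|*|R_{>q}|$, not merely what you call the ``link from below''. The argument does work, but only if you remove the elements of $Q$ in a bottom-up order (always a minimal remaining element of $Q$): then in the current poset the down-set of $q$ equals $f^{-1}(Q_{\leq q})$, and the link, being the join of that contractible subcomplex with whatever still lies above $q$, is contractible because one join factor is. The removal order and the join step must be made explicit; as stated the ``link from below'' claim is not correct. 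Part~(2) likewise needs the connectivity estimate for joins and a relative Hurewicz argument rather than a bare appeal to ``homotopy excision''.

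Two further remarks. For $|j|$ you do not need a skeleton-by-skeleton induction: since $r\circ j=\mathrm{id}_Q$ and $(j\circ r)(\sigma)\geq\sigma$ for every $\sigma\in M(f)$ (for $\sigma=p\in P$ this is $p\leq f(p)$ in $M(f)$, which holds by construction), the $\geq$ version of Lemma~\ref{lemma:quillen2} gives $|j\circ r|\simeq\mathrm{id}$, so $|j|$ and $|r|$ are inverse homotopy equivalences in one line. And your closing sentence undercuts the whole construction: if you are prepared to invoke Quillen's Theorem~A for small categories to handle infinite $Q$, then you can apply it directly to $f\colon P\to Q$, whose comma category $f/q$ is precisely $f^{-1}(Q_{\leq q})$, making the mapping cylinder superfluous for part~(1). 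Either complete the elementary removal argument self-containedly (e.g.\ by taking a colimit over finite down-closed subposets of $Q$, noting that $|P|\hookrightarrow |M(f|_{Q'})|$ is a homotopy equivalence for each such $Q'$), or cite Theorem~A on $f$ directly; the hybrid you propose leaves a genuine gap. The reduction of the $Q_{\geq\sigma}$ variant to opposite posets is correct.
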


\begin{lemma}[{Quillen \cite[1.3]{Qui}}]\label{lemma:quillen2}
Let $P$ be a poset and let $f \colon P \to P$ be a poset map such that, for every $\sigma \in P$, we have $f(\sigma) \leq \sigma$. Then the map induced by $f$ on $|P|$ is homotopic to the identity.
\end{lemma}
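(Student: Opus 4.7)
The plan is to realize the homotopy at the level of posets by constructing an explicit poset map out of the product $P\times[1]$, where $[1]=\{0<1\}$ denotes the two-element totally ordered set, and then passing to geometric realizations. Concretely, I would define
\[H\colon P\times[1]\longrightarrow P,\qquad H(\sigma,0):=f(\sigma),\quad H(\sigma,1):=\sigma.\]
The first step is to verify that $H$ is order-preserving. Given $(\sigma,i)\le(\tau,j)$ in $P\times[1]$ (so $\sigma\le\tau$ and $i\le j$), there are three cases. If $i=j=0$, then $H(\sigma,0)=f(\sigma)\le f(\tau)=H(\tau,0)$ since $f$ is a poset map. If $i=j=1$, then $H(\sigma,1)=\sigma\le\tau=H(\tau,1)$ trivially. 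The only interesting case is $i=0<1=j$, where one needs $f(\sigma)\le\tau$; but by hypothesis $f(\sigma)\le\sigma$, and by assumption $\sigma\le\tau$, so transitivity gives the conclusion. Thus $H$ is a well-defined poset map.

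Passing to geometric realizations yields a continuous map $|H|\colon|P\times[1]|\to|P|$. I would then invoke the standard identification of the geometric realization of the order complex of $P\times[1]$ with the prism $|P|\times[0,1]$: the chains in $P\times[1]$ are precisely the shuffles of chains in $P$ with the unique maximal chain in $[1]$, and these shuffles give the canonical triangulation of $|P|\times[0,1]$. Under this identification, the slice $|P|\times\{0\}$ corresponds to the image of $P\times\{0\}$ and $|P|\times\{1\}$ to the image of $P\times\{1\}$. Consequently $|H|$ becomes a homotopy $|P|\times[0,1]\to|P|$ whose restriction at $t=0$ is $|f|$ and whose restriction at $t=1$ is $\mathrm{id}_{|P|}$, which is exactly what is required.

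The main point requiring care is the topological identification $|P\times[1]|\cong|P|\times[0,1]$, since geometric realization does not commute with products of posets in full generality. For the specific second factor $[1]$, however, this identification is classical and can be proved either by explicitly describing the shuffle triangulation of the prism, or more conceptually by viewing $P$ as a small category: the inequalities $f(\sigma)\le\sigma$ assemble into a natural transformation $f\Rightarrow\mathrm{id}_P$, and the classifying space functor sends natural transformations to homotopies, producing the same map $|H|$. Either formulation suffices, and I would use whichever framework is most compatible with the rest of the paper's conventions on order complexes.
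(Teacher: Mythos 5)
Your proof is correct, and it is essentially the argument of Quillen's result that the paper simply cites without proof: the inequalities $f(\sigma)\le\sigma$ define a poset map (equivalently, a natural transformation from $f$ to $\mathrm{id}_P$) $P\times[1]\to P$, whose realization under the standard identification $|P\times[1]|\cong|P|\times[0,1]$ is the desired homotopy. Nothing further is needed.
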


Let us now introduce a property we shall use to define the space to which we apply these lemmas.
Given a Grushko $W_n$-tree $S$ and a free $W_2$-factor $A\subseteq W_n$, we say that $A$ is \emph{visible in $S$} if the minimal $A$-invariant subtree $S_{A}$ of $S$ is such that, for every $g \in W_n$, either $gS_A=S_A$ or $gS_A \cap S_A$ is at most one point. Equivalently, this means that the quotient graph $A\backslash S_{A}$ embeds in $W_n\backslash S $.
Notice that visibility in $S$ only depends on the conjugacy class of $A$. A partial $W_2$-basis $B=\{[A_1],\ldots,[A_k]\}$ is \emph{visible in $S$} if, for every $i \in \{1,\ldots,k\}$, the free $W_2$-factor $A_i$ is visible in $S$. 

Let $\calz\subseteq\BP_n^*\times K_n$ be the subset consisting of all pairs $(B,S)$ with $B$ visible in $S$. We equip $\BP_n^*\times K_n$ with the product poset structure (i.e.\ $(B,S)\le (B',S')$ if and only if $B\le B'$ and $S\le S'$), and $\calz$ with the subposet structure. 

Our proof has two parts. First we show that its geometric realization $|\calz|$ is homotopically equivalent to $|\BP^*_n|$ (Lemma~\ref{lemma:Z-BP}) by showing that the fibers of the first projection are contractible (Lemma~\ref{lem:visible-contractible}).
Then we determine the homotopy type of $|\calz|$ by using the second projection (Lemma~\ref{lemma:homotopy-type-BP-S}). 

 Given a partial $W_2$-basis $B$, the \emph{visibility space} of $B$ in $|K_n|$, denoted by $\calv^K_B$, is the subcomplex of $|K_n|$ spanned by all vertices corresponding to $W_n$-trees where $B$ is visible. 
 When $B\in \BP_n^*$, then $\calv^K_B$ 
 is the geometric realization of the fiber of $B$ under the first projection $\calz\to \BP_n^*$. 
 Likewise, we define $\calv^{\calo}_B$ and $\calv^{\mathbb{P}\calo}_B$ as the subspaces of $\calo$ and $\mathbb{P}\calo$, respectively, consisting of all trees where $B$ is visible (this makes sense as visibility of $B$ in a tree only depends on the underlying simplicial tree and not on a metric). The following lemma is a variation over \cite[Lemma~5.5]{BG} and crucially relies on the proof by Guirardel and Levitt \cite{GL} of the contractibility of $\calo$.

\begin{lemma}\label{lem:visible-contractible}
 Let $B$ be a partial $W_2$-basis of $W_n$. Then $\calv^{\calo}_B,\calv^{\mathbb{P}\calo}_B$ and $\calv^K_B$ are contractible.
\end{lemma}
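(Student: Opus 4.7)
The proof follows the strategy used by Brück and Gupta \cite[Lemma~5.5]{BG} for the $\Out(F_N)$-analogue, transported to the free Coxeter group setting, and leverages the Guirardel--Levitt proof of contractibility of relative Outer spaces \cite{GL} (not merely the statement).

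A first observation is that visibility of $B$ in a tree depends only on the underlying simplicial structure, not on the choice of metric. The natural retractions $\calo \to \mathbb P\calo \to |K_n|$ modify only the metric data (projective rescaling, then contraction of each open simplex onto its barycenter), and hence preserve visibility. They therefore restrict to deformation retractions of the corresponding visibility subspaces, reducing the lemma to the contractibility of $\calv^{\calo}_B$.

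Write $B=\{[A_1],\dots,[A_k]\}$, and let $\calf_B:=B$ viewed as a free factor system. The key step is to study the collapse map
\[
\phi\colon \calv^{\calo}_B \longrightarrow \calo(W_n,\calf_B)
\]
sending a tree $S$ to the tree obtained by $W_n$-equivariantly collapsing each translate of every $S_{A_i}$ to a single point. Visibility of $B$ in $S$ is exactly the condition ensuring that the translates of $S_{A_i}$ either coincide or are pairwise disjoint, so this collapse is well-defined and yields a Grushko $(W_n,\calf_B)$-tree, with each $A_i$ fixing the image of $S_{A_i}$. The map $\phi$ is continuous in the equivariant Gromov--Hausdorff topology. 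By Guirardel--Levitt \cite[Corollary~4.4]{GL}, the target $\calo(W_n,\calf_B)$ is contractible. The fiber $\phi^{-1}(T)$ over $T\in\calo(W_n,\calf_B)$ consists of blow-ups of $T$ at its $A_i$-fixed vertices $v_i$ by $A_i$-equivariant copies of the (essentially unique) Grushko $W_2$-tree, together with an $A_i$-equivariant attachment of the link of $v_i$. Using the space $\calm$ of morphisms equipped with the equivariant Gromov--Hausdorff topology introduced in Section~2, one parameterizes the blow-up data and mimics the Guirardel--Levitt morphism deformation to prove that each fiber is contractible. A Vietoris--Begle-type argument (contractible base, contractible fibers) then yields the contractibility of $\calv^{\calo}_B$.

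The principal obstacle lies in establishing contractibility of the fibers of $\phi$: the combinatorial data of distributing the link of each $v_i$ among the two orbits of vertices of the blown-up $W_2$-line is discrete, so proving connectedness and contractibility of each fiber requires genuine use of the morphism topology on $\calm$ to interpolate between distinct combinatorial blow-up types. This is precisely why the morphism space $\calm$ and its equivariant Gromov--Hausdorff topology are introduced in the preliminaries, the lemma being the only place they are invoked.
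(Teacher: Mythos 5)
Your reduction to the contractibility of $\calv^{\calo}_B$ via the metric-only retractions $\calo\to\mathbb{P}\calo\to|K_n|$ is sound and matches (in reverse order) how the paper organizes its proof. But the core argument you propose for $\calv^{\calo}_B$ is genuinely different from the paper's and has two serious gaps. First, you never actually prove contractibility of the fibers of the collapse map $\phi\colon\calv^{\calo}_B\to\calo(W_n,\calf_B)$; you explicitly flag this as ``the principal obstacle'' and gesture at using $\calm$ to interpolate between the discrete blow-up types, but this is precisely the content that would need to be supplied, and it is not routine (the fiber over a fixed $T$ is a union of cells indexed by the combinatorial choices of how to attach the link of each $v_i$ to the blown-up $W_2$-line, glued along degenerations, and whether and why this union is contractible is unclear). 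Second, even granting contractible fibers and contractible base, a ``Vietoris--Begle-type argument'' does not yield contractibility of the total space: the classical Vietoris--Begle theorem produces at best a \v{C}ech cohomology isomorphism under properness hypotheses, and $\phi$ is not a fibration; deducing a homotopy equivalence would require showing $\phi$ is a quasi-fibration (or constructing a section together with a fiberwise deformation retraction), neither of which you attempt. A minor additional point: you invoke \cite[Corollary~4.4]{GL} for contractibility of the relative Outer space in the equivariant Gromov--Hausdorff topology, but for a free factor system with infinite factors (as $\calf_B$ here) that corollary concerns the weak topology, which is no longer equivalent to Gromov--Hausdorff; the relevant statement in the Gromov--Hausdorff topology is \cite[Theorem~4.2]{GL}.

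The paper avoids all of this by producing a single, explicit, global deformation retraction of $\calv^{\calo}_B$ onto the cone of one carefully chosen tree $S_0$ whose quotient graph is a segment of groups displaying $B$ visibly. For each $T\in\calo$ one builds a tree $S_0(T)$ in that cone and a morphism $\rho_T\colon S_0(T)\to T$ (determined algebraically by sending fixed points of basis elements to fixed points), and one observes that $T\mapsto\rho_T$ is continuous in $\calm$. The Guirardel--Levitt interpolation maps $\Phi,\Psi\colon\calm\times[0,\infty]\to\calm$ from \cite[Proposition~3.4]{GL} then produce intermediate trees $T_s=\mathrm{range}(\Phi(\rho_T,s))$ giving a continuous retraction $r$ of $\calo$ onto the cone of $S_0$. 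The decisive step, which replaces your fiber analysis entirely, is a direct verification that $r$ preserves $\calv^{\calo}_B$: if some $T_s$ had an edge $e\subseteq(T_s)_{A_i}\cap g(T_s)_{A_i}$ with $g(T_s)_{A_i}\neq(T_s)_{A_i}$, lifting $e$ through $\Phi(\rho_T,s)$ and pushing forward by $\Psi(\rho_T,s)$ produces a nondegenerate arc in $T_{A_i}\cap gT_{A_i}$, contradicting visibility of $B$ in $T$. This yields contractibility of $\calv^{\calo}_B$ with no appeal to fibration-theoretic machinery, and the $\mathbb{P}\calo$ and spine statements follow by the covolume-one section and the barycentric retraction exactly as you indicate.
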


\begin{proof}
We will first prove the contractibility of $\calv^\calo_B$. This will be done by working in the equivariant Gromov-Hausdorff topology.

Write $B=\{[\langle x_1,x_2\rangle],\dots, [\langle x_{2k-1},x_{2k}\rangle]\}$, and choose $x_{2k+1},\dots,x_n$ so that $\{x_1,\dots,x_n\}$ is a basis of $W_n$. For every $i\in\{1,\dots,k\}$, let $A_i=\langle x_{2i-1},x_{2i}\rangle$.

Let $S_0\in \calv^\calo_B$ be the  (metric) Grushko $W_n$-tree whose quotient graph $G_0:=W_n\backslash S_0$ is the segment of groups with $n$ vertices whose associated groups are successively $\langle x_{1}\rangle, \langle x_{2}\rangle, \dots, \langle x_{n}\rangle$,
with all edges of length equal to $1$.

For every $T$ in the (unprojectivized) Outer space $\calo$, we now define a morphism $\rho_T:S_0(T)\to T$, where $S_0(T)$ is a tree in $\calo$ obtained from $S_0$ by varying edge lengths in $(0,\infty)$. For this, we first pull-back $G_0$ as a tree inside $S_0$.
The equivariance requires us to send the $S_0$-fixed point $v_j$ of $x_j$ to the $T$-fixed point $w_j$ of $x_j$. 
We extend $\rho_T$ by equivariance to all the vertices of $S_0$. Then we extend it to the edges by linearity. 
We finally adjust the lengths of the edges in $G_0$ so that $\rho_T$ becomes an isometry on each edge of $S_0$: this metric $W_n$-tree is the required $S_0(T)$.

Notice that,
for every $i\in\{1,\dots,k\}$, the minimal subtree of $A_i=\langle x_{2i-1}, x_{2i}\rangle$ in $S_0(T)$ is sent isometrically by $\rho_T$ to the minimal subtree of $A_i$ in $T$. Notice also that, when $T$ belongs to the cone of $S_0$ in $\calo$, one has $S_0(T)=T$.

The trees $S_0(T)$ vary continuously with $T$. The family of morphisms $\rho_T$ is also continuous in the equivariant Gromov-Hausdorff topology. Indeed, every vertex $u$ in $S_0$ is the fixed point of some $g_u\in W_n$, thus  
for every pair of vertices $u,v$ in $S_0$ and every element $g\in W_n$, the points $\rho_T(u)$ and $g\rho_T(v)$ are algebraically determined as the fixed points of $g_u$ and $gg_vg^{-1}$ in $T$. Therefore $d_T(\rho_T(u),g\rho_T(v))$ varies continuously with $T$, so $\rho_T$ varies continuously with $T$. 

Let $\calm$ be the space of $W_n$-equivariant morphisms between metric Grushko $W_n$-trees, equipped with the equivariant Gromov-Hausdorff topology. By \cite[Proposition~3.4]{GL}, there are two continuous maps 
$\Phi, \Psi: \calm \times [0,\infty]\to \calm$ such that if 
 $f:T_0\to T_\infty$ is any morphism then
$\Phi(f,s)$ is a morphism $T_0\to T_s$ and
$\Psi(f,s)$ is a morphism $T_s\to T_\infty$ such that 
$\Psi(f,s)\circ\Phi(f,s)=f$, where 
$\Phi(f,0)=\mathrm{id}_{T_0}$, 
$\Phi(f,\infty)=f$ 
while $\psi(f,0)=f$, 
$\Psi(f,\infty)=\mathrm{id}_{T_\infty}$. See \cite[Remark~4.1]{GL} for why the intermediate trees $T_s$ all belong to $\calo$.

This is used to build a continuous map $r:\calo\times [0,+\infty]\to\calo$ which is a retraction by deformation onto the cone of $S_0$, by considering the composition of the following continuous maps (where the second is $\Phi$ and the third is the map sending a morphism to its range): 
\[\begin{array}{ccccccc}
\calo\times [0,+\infty]&\to &\calm\times [0,+\infty] &\to &\calm & \to &\calo\\
(T,s) & \mapsto & (\rho_T,s) & \mapsto  &\Phi(\rho_T,s) & \mapsto &  \mathrm{range}(\Phi(\rho_T,s))=:T_s
\end{array}\]

Notice that $T_s$ varies continuously between $T_0=S_0(T)$ and $T_{+\infty}=T$, so the above defines a retraction by deformation onto the cone of $S_0$.

In order to deduce the contractibility of $\calv^\calo_B$, there remains to prove that $r(\calv^\calo_B\times [0,+\infty])\subseteq\calv^\calo_B$. It suffices to prove that, for every $T\in\calv_B^\calo$ and every $s \in [0,+\infty]$, we have $T_s \in \calv^\calo_B$. Suppose towards a contradiction that there exist $T\in\calv_B^\calo$ and $s \in (0,+\infty)$ such that $T_s \notin \calv^\calo_B$. There exist $i \in \{1,\ldots,k\}$, $g\in W_n$ and an edge $e$ of $(T_s)_{A_i}$ such that $g(T_s)_{A_i}\neq (T_s)_{A_i}$ and $e \subseteq (T_s)_{A_i} \cap g(T_s)_{A_i}$. 
Let $\tilde{e}$ be the intersection of the full preimage of $e$ under the morphism $\Phi(\rho_T,s)\colon S_0(T)\to T_s$ with the line $S_0(T)_{A_i}$. Since $\rho_T$ sends the $A_i$-minimal subtree of $S_0(T)$ bijectively to the $A_i$-minimal subtree of $T$, the morphism $\Phi(\rho_T,s)$ also sends bijectively $S_0(T)_{A_i}$ to $(T_s)_{A_i}$. Thus $\tilde{e}$ is an interval of $S_0(T)_{A_i}$ which is nonempty and not reduced to a point. Likewise, let $\tilde{e}_g$ be the intersection of the full preimage of $e$ with $gS_0(T)_{A_i}$. Using once again that $\rho_T$ sends the $A_i$-minimal subtree of $S_0(T)$ bijectively to the $A_i$-minimal subtree of $T$, we then have $\rho_T(\tilde{e}) \subseteq T_{A_i}$ and $\rho_T(\tilde{e}_g) \subseteq gT_{A_i}$, and they are not reduced to a point.
On the other hand $\rho_T(\tilde{e})=\Psi(\rho_T,s)(e)=\rho_T(\tilde{e}_g)$. This contradicts the visibility of $B$ in $T$.

We have thus proved the contractibility of $\calv^\calo_B$, and we will now deduce the contractibility of $\calv^{\mathbb{P}\calo}_B$. For this, we choose a continuous section $s:\mathbb{P}\calo\to\calo$ of the natural projection $\chi:\calo\to\mathbb{P}\calo$, by choosing the representative of covolume $1$ of each tree. We then have a retraction $\bar{r}:\mathbb{P}\calo\times [0,+\infty]\to\mathbb{P}\calo$ by considering the composition of the following continuous maps:
\[\mathbb{P}\calo\times [0,+\infty]\to \calo\times [0,+\infty]\to \calo\to\mathbb{P}\calo,\] where the first map is given using the section $s$, the second one is the retraction $r$ constructed above, and the third one is the projection $\chi$. This gives a continuous retraction onto the simplex of $\chi(S_0)$ (which is itself contractible). The retraction $\bar r$ preserves the visibility subspace $\calv^{\mathbb{P}\calo}_B$ (i.e.\ $\bar{r}(\calv^{\mathbb{P}\calo}_B\times [0,+\infty])\subseteq\calv^{\mathbb{P}\calo}_B$) since this is true for $r$ and $\calv^\calo_B$, and since visibility only depends on the homothety class of a tree.

We finally prove the contractibility of $\calv^K_B$. For this, recall that there is a natural embedding of $|K_n|$ in $\mathbb{P}\calo$ as its barycentric spine. Observe that under this embedding, the subcomplex $\calv_B^K$ is mapped into $\calv^{\mathbb{P}\calo}_B$. There is also a continuous retraction $\pi:\mathbb{P}\calo\to |K_n|$, with the property that $\pi([T])$ always belongs to the (open) simplex of $[T]$ -- the continuity of $\pi$ is clear in the weak topology, and since the simplicial structure on $\mathbb{P}\calo$ is locally finite, the weak topology and the equivariant Gromov-Hausdorff topology on $\mathbb{P}\calo$ coincide. We deduce a retraction by deformation $\hat{r}:|K_n|\times [0,+\infty]\to |K_n|$ onto a point (corresponding to the simplex of $\chi(S_0)$), obtained as the composition of the following continuous maps:
\[ |K_n|\times [0,+\infty]\to \mathbb{P}\calo\times [0,+\infty]\to\mathbb{P}\calo\to |K_n|,\] where the first map is obtained from the inclusion $|K_n|\to\mathbb{P}\calo$, the second one is the retraction by deformation $\bar{r}$ constructed above, and the third one is the retraction $\pi$. We finally observe that $\hat{r}$ preserves the visibility subspace $\calv^K_B$ (i.e.\ $\hat{r}(\calv^{K}_B\times [0,+\infty])\subseteq\calv^K_B$), using the facts that $\bar{r}$ preserves visibility, that $\pi$ sends open simplices inside themselves and that visibility is independent of the choice of a tree in an open simplex. 
\end{proof}

Recall that $\calz\subseteq\BP_n^*\times K_n$ is the subset consisting of all pairs $(B,S)$ with $B$ visible in $S$, equipped with the product poset structure.

\begin{lemma}\label{lemma:Z-BP}
The simplicial complex $|\calz|$ is homotopy equivalent to $|\BP_n^*|$.
\end{lemma}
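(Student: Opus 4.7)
The plan is to apply Lemma~\ref{lemma:quillen}(\ref{Quillema contract}) to the first projection $p_1\colon\calz\to\BP_n^*$, $(B,S)\mapsto B$. Since the geometric realization of a poset coincides with that of its opposite, the statement of Lemma~\ref{lemma:quillen}(\ref{Quillema contract}) is equivalent to its dual form in which $Q_{\le\sigma}$ is replaced by $Q_{\ge\sigma}$, and I shall use that dual form. It thus suffices to show that, for every $B_0\in\BP_n^*$, the geometric realization of the upward fiber
\[
P_{B_0} := p_1^{-1}\bigl((\BP_n^*)_{\ge B_0}\bigr) = \{(B',S)\in\calz : B_0 \le B'\}
\]
is contractible.

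The key observation is that visibility is an \emph{elementwise} property of a partial basis: if $B_0\le B'$, i.e.\ $B_0\subseteq B'$, then visibility of $B'$ in a Grushko tree $S$ automatically forces visibility of $B_0$ in $S$. This lets me define
\[
r\colon P_{B_0}\longrightarrow P_{B_0}, \qquad r(B',S):=(B_0,S),
\]
which is well-defined by the preceding remark (the image lies in $\calz$), which is a poset map (it fixes the second coordinate and is constant in the first), and which satisfies $r(B',S)\le(B',S)$ since $B_0\le B'$. By Lemma~\ref{lemma:quillen2}, the induced map on $|P_{B_0}|$ is homotopic to the identity.

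On the other hand, $r$ is an idempotent poset retraction onto the subposet $\{B_0\}\times\calv_{B_0}^K$, whose geometric realization is canonically identified with the subcomplex $\calv_{B_0}^K$ of $|K_n|$ spanned by the visibility trees; this subcomplex is contractible by Lemma~\ref{lem:visible-contractible}. Since $r$ is a retraction homotopic to the identity, the inclusion of the image of $r$ into $P_{B_0}$ is a homotopy equivalence, and $|P_{B_0}|$ is contractible as required. Applying the dual of Lemma~\ref{lemma:quillen}(\ref{Quillema contract}) to $p_1$ then gives $|\calz|\simeq|\BP_n^*|$.

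I do not expect any real obstacle in executing this argument; the only subtle point is to recognize the asymmetry between upward and downward fibers. Upward fibers work because enlarging a partial basis can only restrict visibility, so the constant assignment $B'\mapsto B_0$ gives a natural poset retraction moving downward in the product order. The downward fibers $p_1^{-1}\bigl((\BP_n^*)_{\le B_0}\bigr)$ do not admit an analogous retraction, since a visible sub-basis $B'\le B_0$ in a tree $S$ cannot in general be enlarged to $B_0$ while keeping it visible in $S$.
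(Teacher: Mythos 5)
Your proof is correct and follows essentially the same route as the paper: apply Quillen's fiber lemma to the first projection $\pi_1\colon\calz\to\BP_n^*$, and contract each upward fiber $\pi_1^{-1}\bigl((\BP_n^*)_{\ge B_0}\bigr)$ onto $\{B_0\}\times\calv_{B_0}^K$ via the poset retraction $(B',S)\mapsto(B_0,S)$ together with Lemmas~\ref{lemma:quillen2} and~\ref{lem:visible-contractible}. You were right to flag explicitly that you need the $Q_{\ge\sigma}$ variant of Lemma~\ref{lemma:quillen}(\ref{Quillema contract}) (obtained by passing to opposite posets), and your closing remark correctly identifies why the upward fibers are the ones that retract — this is a useful clarification of a point the paper passes over silently.
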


\begin{proof}
We will apply Lemma~\ref{lemma:quillen}(\ref{Quillema contract}) to the projection $\pi_1:\calz\to\BP_n^*$. So let us prove that for every partial basis $B\in\BP_n^*$, the geometric realization of \[\pi_1^{-1}((\BP_n^*)_{\ge B})=\{(B',S)\colon B'\geq B \text{ and $B'$ is visible  in } S\}\] is contractible.
We first observe that if $B'$ is visible in $S$ and $B'\ge B$, then $B$ is visible in $S$. Using Lemma~\ref{lemma:quillen2}, the map $(B',S)\mapsto (B ,S)$ yields a retraction from $|\pi_1^{-1}((\BP_n^*)_{\ge B})|$ to $|\pi_1^{-1}(\{B\})|$.
The space $|\pi_1^{-1}(\{B\})|$ is equal to the subcomplex $\calv^K_B$ of $|K_n|$ spanned by all Grushko $W_n$-trees where $B$ is visible. This is contractible in view of Lemma~\ref{lem:visible-contractible}.
\end{proof}

Let us now turn to determine the homotopy type of $|\calz|$.

Recall that the quotient $W_n\backslash S$ of a Grushko $W_n$-tree $S$ is a finite tree with vertex groups of order $1$ or $2$.
A basis of $W_n$ is \emph{adapted} to $S$ if it consists of the nontrivial stabilizers of the vertices of a connected fundamental domain $L\subseteq S$.

\begin{lemma}\label{lemma:twistor}
Let $S\in K_n$, let $X$ be a basis of $W_n$ adapted to $S$. Let $x,y\in X$, and let $a_x,a_y$ be their fixed points in $S$. Let $x=x_1,x_2,\ldots,x_p=y$ be the generators of the successive nontrivial stabilizers of vertices in the segment $[a_x,a_y]$ (all the $x_i$ belong to $X$ because $X$ is adapted to $S$). 

If $A=\langle x,gyg^{-1}\rangle$ for some $g\in W_n$ is a visible
free $W_2$-factor in $S$ then there exist $\epsilon_{1},\ldots,\epsilon_{p} \in \{0,1\}$ such that $g=x_{1}^{\epsilon_{1}}\ldots x_{p}^{\epsilon_{p}}.$ 

In particular, the set of partial $W_2$-bases that are visible in $S$ is finite.
\end{lemma}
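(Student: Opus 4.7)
The plan is to analyse the segment $\sigma=[a_x,g\cdot a_y]$, which is a fundamental domain for the action of $A$ on the axis $S_A$, and to push it forward to the quotient $W_n\backslash S$, identified with the fundamental domain $L$. Visibility of $A$---which by definition means that the natural map $A\backslash S_A\to W_n\backslash S$ is an embedding---will force this push-forward to be a geodesic in $L$, and the form of $g$ will be read off from it.

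Concretely, write $g=y_1y_2\cdots y_m$ as the unique reduced expression of $g$ in the basis $X$ (with each $y_k\in X$ and $y_{k+1}\neq y_k$, since $W_n$ is a free product of cyclic groups of order $2$). By a standard Bass--Serre argument, the geodesic $\sigma$ decomposes as a concatenation of $m+1$ subsegments: the $k$-th ($1\le k\le m+1$) lies inside the translate $y_1\cdots y_{k-1}L$ of the fundamental domain, and consecutive subsegments meet at the vertex $y_1\cdots y_{k-1}\cdot a_{y_k}$ (fixed by the appropriate conjugate of $y_k$) for $1\le k\le m$. Projecting to $W_n\backslash S\cong L$ turns this into the concatenated walk $a_{x_1}\to a_{y_1}\to a_{y_2}\to\cdots\to a_{y_m}\to a_{x_p}$, each step being a geodesic sub-arc of $L$ (with the first or last possibly degenerate if $y_1=x_1$ or $y_m=x_p$).

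Next, I would invoke visibility. Since $A$ acts freely on edges of $S_A$ and $\sigma$ is a fundamental domain, distinct edges of $\sigma$ have distinct images in $A\backslash S_A$, hence by visibility distinct images in $L$. The concatenated walk above therefore traverses each edge of $L$ at most once. Because $L$ is a tree, such a walk is non-backtracking at every junction and must coincide with the unique geodesic between its endpoints, namely $\tau=[a_{x_1},a_{x_p}]$. The pivots $a_{y_1},\ldots,a_{y_m}$ thus lie on $\tau$ in their natural order along $\tau$. As the vertices of $\tau$ with nontrivial stabilizer are exactly $a_{x_1},\ldots,a_{x_p}$, we get $y_k=x_{j_k}$ for an increasing sequence $1\le j_1<j_2<\cdots<j_m\le p$, and setting $\epsilon_i=1$ precisely for $i\in\{j_1,\ldots,j_m\}$ yields $g=x_1^{\epsilon_1}\cdots x_p^{\epsilon_p}$.

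For the finiteness statement, each ordered pair $(x,y)\in X\times X$ produces at most $2^p\le 2^n$ admissible elements $g$, hence at most $n^2\cdot 2^n$ visible free $W_2$-factors of the form $\langle x,gyg^{-1}\rangle$ with $x,y\in X$. Every involution in the free product $W_n$ is conjugate to an element of $X$, and visibility is a conjugacy invariant of the free $W_2$-factor, so every visible conjugacy class admits such a representative; consequently there are only finitely many visible conjugacy classes of free $W_2$-factors, and therefore only finitely many visible partial $W_2$-bases. The main technical point I expect to require care is the Bass--Serre decomposition of $\sigma$ itself, particularly in the degenerate cases $y_1=x_1$ or $y_m=x_p$ where the first or last subsegment has length zero; once the projection to $L$ is set up correctly, the tree argument and the finiteness conclusion are immediate.
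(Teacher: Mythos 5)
Your proof is correct, and it takes a genuinely different (though related) route from the paper's. Both arguments start from the same key consequence of visibility: the segment $[a_x,ga_y]$, being a fundamental domain for $A$ on $S_A$, maps injectively to $W_n\backslash S$, and since its endpoints map to $a_x$ and $a_y$ its image must equal the image of $[a_x,a_y]$. The paper then runs a local, step-by-step induction: it considers the largest $\ell$ with $b_\ell\in[a_x,a_y]\cap[a_x,ga_y]$ (where $b_i$ is the vertex of $[a_x,ga_y]$ over $a_i$), uses the coincidence of images plus the fact that the interiors between consecutive $a_i$'s are stabilizer-free to produce the reflection $x_\ell$ realigning $[a_\ell,b_{\ell+1}]$ with $[a_\ell,a_{\ell+1}]$, replaces $g$ by $x_\ell g$, and repeats; the ordered product $x_{\ell_1}\cdots x_{\ell_k}$ collected along the way is precisely $g$ (up to a final factor of $x_p$). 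You instead decompose $\sigma=[a_x,ga_y]$ globally according to the tiling of $S$ by translates of $L$ (governed by the reduced word $g=y_1\cdots y_m$), project each piece to $L\cong W_n\backslash S$ to obtain a concatenated walk $a_x\to a_{y_1}\to\cdots\to a_{y_m}\to a_y$ of geodesic arcs, and argue that visibility forces this walk to be an edge-trail in the tree $L$, hence the geodesic $[a_x,a_y]$, from which the form of $g$ is read off at once. Your approach is more structural and reads the answer off the normal form of $g$ in one pass, while the paper's avoids invoking the Bass--Serre decomposition of the path and works entirely with segments in $S$; the only slight care your version needs (which you flag) is handling the degenerate first/last subsegment when $y_1=x$ or $y_m=y$.
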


\begin{proof}
First note that $[a_x,ga_y]$ is a fundamental domain for the action of $A$ on $S_A$. Since $A$ is visible in $S$, the image of $[a_x,ga_y]$ in $W_n \backslash S$ is injective and is the same as that of $[a_x,a_y]$. Thus, the segment $[a_x,ga_y]$ contains the same number of vertices with nontrivial stabilizers. 
Let $a_1,\dots,a_p$ be the fixed points in $S$ of $x_1,\dots,x_p$, and let $b_{1},\ldots,b_{p}$ be the vertices of $[a_x,ga_y]$ whose images in $W_n \backslash S$ are the same as $a_{1},\ldots,a_{p}$, respectively. Note that $b_1=a_x$ and that $b_{p}=ga_y$. Let $\ell \in \{2,\ldots,{p}\}$ be the greatest integer such that $b_{\ell} \in [a_x,a_y] \cap [a_x ,ga_y]$. If $\ell=p$, then $b_\ell=a_y$, that is $ga_y=a_y$. Hence $g \in \langle x_p \rangle$ and we are done. Otherwise, since $[a_{\ell},a_{\ell+1}]$ and $[a_{\ell},b_{\ell+1}]$ have the same image in the quotient and have fixed point free interiors, there is a nontrivial element of the stabilizer of $a_{\ell}$, i.e.\ $x_{\ell}$, sending one to the other.
Thus, the segment $[a_x,x_{\ell} b_{\ell+1}]=[a_x,a_{\ell +1}]$ is contained in $[a_x,a_y]$. 
Up to replacing $g$ by $x_{\ell}g$, the length of $[a_x,a_y] \cap [a_x,ga_y]$
has increased.
An immediate finite induction concludes the proof.
\end{proof}

\begin{lemma}\label{lemma:visibility-pointwise-setwise}
Let $S\in K_n$. Let $C=\{[A_1],\dots,[A_k]\}$ be any finite set of conjugacy classes of free $W_2$-factors such that
\begin{enumerate}
    \item each $[A_i]$ is visible in $S$;
    \item given any $i\neq j$ and any representatives $A'_i,A'_j$ in the conjugacy classes $[A_i],[A_j]$, one has $A'_i\cap A'_j=\{1\}$.
\end{enumerate}
Then $C$ is a partial $W_2$-basis.
\end{lemma}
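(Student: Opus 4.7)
The plan is to show that $C$ is a partial $W_2$-basis by exhibiting an explicit free product decomposition $W_n=A_1'\ast\cdots\ast A_k'\ast B$ for well-chosen representatives $A_i'\in[A_i]$. First I would fix a basis $X=(x_1,\dots,x_n)$ of $W_n$ adapted to $S$, so that each $x_i$ fixes a vertex $a_{x_i}$ in a connected fundamental domain. Applying Lemma~\ref{lemma:twistor} to each visible class $[A_i]$, I can write a representative in the form $A_i=\langle x_{\alpha_i},\, g_i x_{\beta_i} g_i^{-1}\rangle$ with $x_{\alpha_i},x_{\beta_i}\in X$ and $g_i$ a specific word in the basis elements indexed by the labeled vertices on the segment $[a_{x_{\alpha_i}}, a_{x_{\beta_i}}]$. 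Condition~(2) then immediately forces the pairs $\{\alpha_i,\beta_i\}$ to be pairwise disjoint subsets of $\{1,\dots,n\}$: if some index appeared in both $[A_i]$ and $[A_j]$, then for matching conjugate representatives the corresponding basis involution would lie in $A_i'\cap A_j'$, contradicting~(2).

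Next I would exhibit, for suitable conjugating elements $h_1,\dots,h_k\in W_n$, the $n$-element set of involutions $\mathcal{Y}$ consisting of the two generators of each $A_i':=h_i A_i h_i^{-1}$ together with the unpaired $x_j$. By construction, $\mathcal{Y}$ contains exactly one involution in each of the $n$ conjugacy classes of involutions of $W_n$. Since $W_n$ is finitely generated and residually finite, it is Hopfian; hence any $n$-element generating set of involutions, one per conjugacy class, is automatically a basis. Grouping the generators of each $A_i'$ then gives $W_n=A_1'\ast\cdots\ast A_k'\ast B$ with $B=\langle x_j : j \text{ unpaired}\rangle$, proving that $C$ is a partial $W_2$-basis.

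The main obstacle is therefore the generation step: choosing the $h_i$'s so that $\mathcal{Y}$ actually generates $W_n$. The naive choice $h_i=1$ can fail, because a twist word $g_i$ may involve basis elements $x_{\beta_j}$ corresponding to the "twisted" (rather than untwisted) generator of another pair, creating circular dependencies that obstruct direct derivation of the $x_{\beta_j}$'s. The geometric meaning of varying $h_i$ is to shift the twist along the path in $\mathcal{G}_S$ and, in particular, to swap which endpoint of the pair plays the role of the untwisted base; condition~(2), together with the tree structure of $\mathcal{G}_S$ and the constraint on $g_i$ from Lemma~\ref{lemma:twistor}, should always allow a consistent choice. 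A clean way to carry this out would be an induction on $k$: assuming by induction that $\{[A_1],\dots,[A_k]\}$ is a partial $W_2$-basis giving a decomposition $W_n=A_1'\ast\cdots\ast A_k'\ast B$, I would apply the Kurosh subgroup theorem to $A_{k+1}\cong W_2$ inside this free product. Condition~(2) kills all Kurosh factors coming from conjugates of the $A_i'$, so $A_{k+1}$ decomposes as the free product of two $\mathbb{Z}/2$-subgroups conjugate into $B$; visibility of $A_{k+1}$ in $S$ supplies the rigidity needed to conclude that (after replacing $B$ by a suitable free factor complement) $\{[A_1],\dots,[A_{k+1}]\}$ is a partial $W_2$-basis, completing the induction.
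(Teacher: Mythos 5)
You correctly identify both the general shape of the argument (pass to a basis adapted to $S$, use Lemma~\ref{lemma:twistor} to control the twist words, exhibit a new $n$-element basis of involutions grouping into the $A_i'$'s) and its central difficulty (potential circular dependencies among the $g_i$'s). But you do not resolve that difficulty: the phrase ``condition~(2), together with the tree structure\ldots, should always allow a consistent choice'' is a promissory note rather than a proof, and it is exactly here that the real content lies. The paper's resolution is a specific ordering device: number the adapted basis $\{x_1,\dots,x_n\}$ so that for each $i$ the convex hull of $\{a_1,\dots,a_i\}$ in $S$ contains no $a_j$ with $j>i$. Writing each representative as $A'_j=\langle x_{\alpha(j)}, g_j x_{\beta(j)} g_j^{-1}\rangle$ with $\alpha(j)\le\beta(j)$, this numbering forces the segment $[a_{\alpha(j)},a_{\beta(j)}]$ to avoid every $a_\ell$ with $\ell>\beta(j)$; together with Lemma~\ref{lemma:twistor} (and absorbing an optional trailing factor of $x_{\beta(j)}$), this yields $g_j\in\langle x_1,\dots,x_{\beta(j)-1}\rangle$. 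A single increasing pass then works: set $y_i=x_i$ unless $i=\beta(j)$ for some $j$, in which case set $y_i=g_jx_ig_j^{-1}$; by the time index $i=\beta(j)$ is reached, $g_j$ already lies in $\langle y_1,\dots,y_{i-1}\rangle$, so $x_i\in\langle y_1,\dots,y_i\rangle$ and $\{y_1,\dots,y_n\}$ generates $W_n$. No auxiliary conjugators $h_i$ and no swapping of endpoint roles are needed once the ordering is fixed; the ``untwisted'' generator of each pair is simply the one with the smaller index.

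Your alternative Kurosh route does not close the gap either. Condition~(2) does kill the Kurosh factors of $A_{k+1}$ coming from conjugates of $A'_1,\dots,A'_k$, so $A_{k+1}$ is a free product of two order-two subgroups conjugate into $B$; but to conclude that $\{[A_1],\dots,[A_{k+1}]\}$ is a partial $W_2$-basis you must control the Kurosh coset representatives well enough to build a complementary free factor containing a conjugate of $A_{k+1}$, and that control is exactly what Lemma~\ref{lemma:twistor} extracts from visibility. You invoke ``the rigidity supplied by visibility'' without supplying it, so the difficulty is relocated rather than resolved.
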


\begin{proof}
Let $\{x_1,\dots,x_n\}$ be a free basis of $W_n$ which is adapted to $S$. For every $i\in\{1,\dots,n\}$, let $a_i$ be the fixed point of $x_i$ in $S$. Since the basis is adapted, the finite subtree $L$ of $S$ spanned by the vertices $a_i$ is a fundamental domain for the $W_n$-action on $S$. Since the quotient $W_n\backslash S$ is a tree, we can (and shall) assume that the numbering of $\{x_1,\dots,x_n\}$ has been chosen so that for every $i\in\{1,\dots,n\}$, the convex hull of $\{a_1,\dots,a_i\}$ does not contain $a_j$ for $j>i$. 

For every $j\in\{1,\dots,k\}$, there exist two integers $\alpha(j)\le\beta(j)$ and $g_j\in W_n$ such that $A'_j=\langle x_{\alpha(j)},g_jx_{\beta(j)}g_j^{-1}\rangle$ is a representative of the conjugacy class $[A_j]$. The second assumption of the lemma ensures that the integers $\alpha(1),\beta(1),\dots,\alpha(k),\beta(k)$ are pairwise distinct.

We will now construct a finite set $\{y_1,\dots,y_n\}$ of elements of order $2$ with the following properties: 
\begin{enumerate}
\item[(i)] for every $j\in\{1,\dots,k\}$, we have $A'_j=\langle y_{\alpha(j)},y_{\beta(j)}\rangle$;
\item[(ii)] for every $i\in\{1,\dots,n\}$, we have $x_i\in\langle y_1,\dots,y_i\rangle$.   
\end{enumerate}
The second property will imply that $\{y_1,\dots,y_n\}$ is a free basis of $W_n$, and the lemma follows.

We now construct the elements $y_i$ inductively. We first set $y_1=x_1$. Let now $i\ge 2$, and assume $y_1,\dots,y_{i-1}$ have already been constructed. 
\\
-- If $x_i$ is not conjugate into any of the free factors $A'_j$, we set $y_i=x_i$, and Property~(ii) holds by induction. 
\\Suppose now that there exists $j\in\{1,\dots,k\}$ such that $x_i$ is conjugate into $A'_j$. \\
-- If $x_i=x_{\alpha(j)}$, we set $y_i=x_i$, and Property~(ii) holds by induction. \\
-- If $x_i=x_{\beta(j)}$, we set $y_i=g_jx_{\beta(j)}g_j^{-1}$. By assumption $[A'_j]$ is visible in $S$. Also, by the choice of the numbering of the set $\{x_1,\dots,x_n\}$, the segment $[a_{\alpha(j)},a_{\beta(j)}]$ does not contain any vertex $a_\ell$ with $\ell>\beta(j)$. We can therefore apply Lemma~\ref{lemma:twistor} and deduce that $g_j\in\langle x_1,\dots,x_{\beta(j)-1}\rangle$. By induction we have $g_j\in\langle y_1,\dots,y_{i-1}\rangle$, hence $x_i\in\langle y_1,\dots,y_i\rangle$. So Property~(ii) holds. 

Finally, our construction ensures that for every $j\in\{1,\dots,k\}$, we have $y_{\alpha(j)}=x_{\alpha(j)}$ and $y_{\beta(j)}=g_jx_{\beta(j)}g_j^{-1}$, so $\langle y_{\alpha(j)},y_{\beta(j)}\rangle=A'_j$. This checks Property~(i), and concludes our proof.  
\end{proof}

Recall that the definition of $\BP^*_n$ relies on a choice of a basis $X^*=(x_1^*, \dots, x_n^*)$ of $W_n$. 
Given $S\in K_n$, we denote by $\BP^*_S$ the subposet of $\BP_n^*$ whose elements are the $X^*$-paired partial $W_2$-bases that are visible in $S$.

\begin{lemma}\label{lemma:homotopy-type-BP-S}
For every $S\in K_n$, the complex $|\BP^*_S|$ is either contractible or homotopy equivalent to a wedge of spheres of dimension $\lfloor\frac{n}{2}\rfloor-1$. 
\end{lemma}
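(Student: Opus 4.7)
The plan is to identify $\BP^*_S$ as the face poset of a simplicial join of finite discrete sets, and then read off the homotopy type from a standard computation.

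Set $m := \lfloor n/2 \rfloor$. Every $X^*$-paired free $W_2$-factor has a well-defined \emph{type} $i \in \{1, \ldots, m\}$, namely the unique index such that its two order-$2$ generators are $W_n$-conjugate to $x^*_{2i-1}$ and $x^*_{2i}$ respectively. For each $i$, I will let $\calf_i(S)$ be the set of conjugacy classes of visible $X^*$-paired type-$i$ free $W_2$-factors. By Lemma~\ref{lemma:twistor} applied to a basis of $W_n$ adapted to $S$, each $\calf_i(S)$ is finite. It is also nonempty: the finite tree $W_n \backslash S$ contains exactly one vertex per $W_n$-conjugacy class of order-$2$ subgroups, and lifting the reduced edge-path between the vertices corresponding to $[\langle x^*_{2i-1}\rangle]$ and $[\langle x^*_{2i}\rangle]$ produces a visible $X^*$-paired type-$i$ factor.

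The key claim is that $\BP^*_S$ is canonically isomorphic to the face poset of the simplicial join $\calf_1(S) * \cdots * \calf_m(S)$, each $\calf_i(S)$ viewed as a discrete complex. In any partial basis the constituent factors have pairwise distinct types, because by uniqueness of the Grushko decomposition of $W_n$ each conjugacy class of order-$2$ elements can appear in at most one free factor of any decomposition. Conversely, I claim that any transversal $\{c_{i_1}, \ldots, c_{i_k}\}$ with $c_{i_j} \in \calf_{i_j}(S)$ is a partial $W_2$-basis visible in $S$: each $c_{i_j}$ is individually visible, and the trivial-intersection hypothesis of Lemma~\ref{lemma:visibility-pointwise-setwise} for chosen representatives is verified by a tree-theoretic argument combining visibility with the fact that distinct types involve disjoint $W_n$-conjugacy classes of order-$2$ elements. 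This pairwise intersection check is the main obstacle in the plan; I would handle it by showing that a nontrivial common element would act on $S$ with axis or fixed set sitting inside both minimal subtrees in a way incompatible with their separate visibility.

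With the join identification in hand, the homotopy type of $|\BP^*_S|$ is determined by the classical computation for joins of discrete complexes: the simplicial join of $m$ nonempty finite discrete sets with $|\calf_i(S)|$ vertices is homotopy equivalent to a wedge of $\prod_{i=1}^m (|\calf_i(S)| - 1)$ spheres of dimension $m-1$. If some $\calf_i(S)$ is a singleton the product vanishes and $|\BP^*_S|$ is contractible, since a join with a point is a cone; otherwise $|\BP^*_S|$ is homotopy equivalent to a nonempty wedge of spheres of dimension $\lfloor n/2 \rfloor - 1$, as required.
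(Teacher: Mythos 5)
Your proposal is correct and follows essentially the same route as the paper: you identify $\BP^*_S$ with the face poset of the join of the finite discrete sets $\calf_i(S)$ (the paper's $X_i$) and read off the homotopy type from the standard computation for joins of discrete sets, whereas the paper packages the same identification into its poset $P$ and proves the equivalent statement by an explicit induction (Lemma~\ref{lemma:homotopy}). The trivial-intersection hypothesis you flag as the main obstacle is also left implicit in the paper's invocation of Lemma~\ref{lemma:visibility-pointwise-setwise}, and your sketched tree argument does close it: an order-$2$ element of $A'_i\cap A'_j$ is excluded because the order-$2$ conjugacy classes occurring in $A'_i$ and $A'_j$ are disjoint, and an infinite-order element would force $S_{A'_i}=S_{A'_j}$, so by visibility the common image of these lines in $W_n\backslash S$ would be a single embedded segment whose endpoints would have to lie simultaneously in the conjugacy classes of type $i$ and of type $j$, a contradiction.
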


\begin{proof}
For every $i\in\{1,\dots,\lfloor \frac{n}{2}\rfloor\}$, we let $X_i$ be the set of all conjugacy classes of free $W_2$-factors that are visible in $S$ and conjugate to a free factor of the form $\langle x_{2i-1}^*,gx_{2i}^*g^{-1}\rangle$ for some $g\in W_n$. Lemma~\ref{lemma:twistor} implies that $X_i$ is finite. We now let $X=\coprod X_i$. Let $P$ be the partially ordered set (by inclusion) of all nonempty subsets of $X$ that contain at most one element in each subset of the form $X_i$. 

It follows from Lemma~\ref{lemma:visibility-pointwise-setwise} that any collection of conjugacy classes of free $W_2$-factors taken from different $X_i$'s is automatically a partial $W_2$-basis.
Then $\BP^*_S$ is isomorphic to $P$. The conclusion thus follows from the independent Lemma~\ref{lemma:homotopy} below.
\end{proof}

\begin{lemma}\label{lemma:homotopy}
Let $X=\coprod X_i$ be a partition of a finite set $X$ into $k$ disjoint nonempty subsets.
Let $P$ be the partially ordered set (ordered by inclusion) consisting of all finite nonempty subsets of $X$ that contain at most one element in each subset of the form $X_i$.

Then $|P|$ is either contractible or homotopy equivalent to a wedge of spheres of dimension $k-1$.
\end{lemma}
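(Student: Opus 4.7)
The plan is to identify $|P|$ with an iterated topological join and then read off its homotopy type from standard properties of the join. First, I would let $\Delta$ be the abstract simplicial complex with vertex set $X$ whose simplices are the nonempty subsets of $X$ meeting each $X_i$ in at most one point; then $P$ is by definition the face poset of $\Delta$, and the order complex $|P|$ is canonically the barycentric subdivision of $\Delta$, hence homeomorphic to $|\Delta|$. So it suffices to determine the homotopy type of $|\Delta|$.

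Next, I would observe that $\Delta$ is the simplicial join $\Delta_1 \ast \cdots \ast \Delta_k$, where each $\Delta_i$ is the $0$-dimensional simplicial complex whose vertex set is $X_i$ (only vertices, no higher simplices). Indeed, a nonempty simplex of the join is obtained by choosing a (possibly empty) simplex in each factor and taking their disjoint union, with at least one choice nonempty; since each $\Delta_i$ has only vertices as its nonempty simplices, this recovers exactly the elements of $P$. Passing to geometric realizations yields a homeomorphism $|\Delta| \cong |\Delta_1| \ast \cdots \ast |\Delta_k|$, where each $|\Delta_i|$ is a finite discrete space with $|X_i|$ points.

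From here the conclusion is essentially formal. If some $|X_i| = 1$, then the corresponding $|\Delta_i|$ is a single point, so $|\Delta|$ is a cone on the join of the remaining factors, hence contractible. Otherwise each $|\Delta_i|$ is homotopy equivalent to a wedge of $|X_i|-1$ copies of $S^0$; using that the topological join distributes over wedges up to homotopy together with the identification $S^0 \ast \cdots \ast S^0 \simeq S^{k-1}$ for the $k$-fold join, one concludes that $|\Delta|$ is homotopy equivalent to a wedge of $\prod_{i=1}^k (|X_i|-1)$ copies of $S^{k-1}$. I do not anticipate a genuine obstacle: the only step requiring a moment of care is the identification of $\Delta$ as a simplicial join, which is a direct unpacking of the definitions, and the homotopy type of iterated joins of finite discrete spaces is classical. (An alternative route, avoiding the join formalism, would be to induct on $k$: apply Lemma~\ref{lemma:quillen2} to the map $S \mapsto S \cup \{x\}$ for a fixed $x \in X_k$ to handle the case $|X_k|=1$, and otherwise use Lemma~\ref{lemma:quillen} together with the fact that the subposets of elements containing a given vertex of $X_k$ are contractible cones over the analogous poset for $k-1$ factors, to propagate the wedge-of-spheres conclusion.)
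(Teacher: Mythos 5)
Your proof is correct and takes a genuinely cleaner route than the paper's. You observe that $P$ is the face poset of the simplicial join $\Delta_1 * \cdots * \Delta_k$ of the $0$-dimensional complexes $\Delta_i$ with vertex sets $X_i$, so that $|P|\cong |\Delta_1|*\cdots*|\Delta_k|$ as an iterated join of finite discrete spaces; the homotopy type then follows from the standard facts that a join with a point is a cone, that join distributes over wedges up to homotopy for CW complexes, and that the $k$-fold join of copies of $S^0$ is $S^{k-1}$. This yields the sharper conclusion that $|P|\simeq\bigvee_{\prod_i(|X_i|-1)} S^{k-1}$, making the exact Betti number visible. The paper instead argues directly by induction on $k$: setting $P'$ to be the analogous poset for $X'=\coprod_{i<k}X_i$, it realizes $|P|$ as $\card(X_k)$ copies of the cone $C(|P'|)$ glued along $|P'|$, and observes that this is contractible if some $\card(X_i)=1$, and otherwise is (up to homotopy) a wedge of $\card(X_k)-1$ copies of the suspension $\Sigma|P'|$. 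This is precisely the join construction unwound one factor at a time, so the two arguments are morally the same; the paper's phrasing is self-contained and avoids invoking distributivity of joins over wedges, while yours is shorter and more structural and gives the sphere count in one stroke. The alternative inductive route you sketch in your final parenthetical, using Quillen's Lemma~\ref{lemma:quillen2} via $S\mapsto S\cup\{x\}$, is again essentially the paper's argument.

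Two small points worth being explicit about if you were to write this up. First, when you say each $|\Delta_i|$ is homotopy equivalent to a wedge of $|X_i|-1$ copies of $S^0$ and then apply distributivity, you should fix a basepoint in each factor and work with well-pointed spaces (which finite CW complexes are) so that the unreduced join agrees up to homotopy with the reduced one; this is standard but deserves a word. Second, the paper's stated count "a wedge of $\card(X_k)-1$ spheres" in the inductive step is off by a factor of the number of spheres in $|P'|$; your formula $\prod_i(|X_i|-1)$ is the correct one, and the lemma as stated only asserts "a wedge of spheres," so this does not affect the result.
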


\begin{proof}
The proof is by induction on $k$. If $k=1$, then $|P|$ is just a disjoint union of $\card(X_1)$ points. Let now $k\ge 2$, let $X'=\coprod_{i=1}^{k-1}X_i$, and let $P'$ be the corresponding partially ordered set. Let $Y$ be the cone over $|P'|$, i.e. $Y=(|P'|\times [0,1])/{\sim}$, where $\sim$ is the equivalence relation defined by letting $(x,1)\sim (y,1)$ for all $x,y\in |P'|$. Then $|P|$ is homeomorphic to the space obtained by gluing $\card(X_k)$ copies of $Y$ along the subspaces $|P'|\times\{0\}$ via the identity map. Therefore $|P|$ is contractible if $\card(X_k)=1$ or if $|P'|$ is contractible; otherwise by induction $|P'|$ is homotopy equivalent to a wedge of spheres of dimension $k-2$, and therefore $|P|$ is homotopy equivalent to a wedge of $\card(X_k)-1$ spheres of dimension $k-1$. 
\end{proof}

\begin{proof}[Proof of Theorem~\ref{theo:bp}]
By Lemma~\ref{lemma:Z-BP}, the complex $|\BP^*_n|$ is homotopy equivalent to $|\calz|$. We will apply Lemma~\ref{lemma:quillen}(\ref{Quillema m-connect}) to the projection $\pi_2:\calz\to K_n$. As $|K_n|$ is contractible, we only need to check that for every $S\in K_n$, the geometric realization of the fiber $\pi_2^{-1}((K_n)_{\ge S})$ is $(\lfloor\frac{n}{2}\rfloor-2)$-connected. Notice that given any tree $T\in K_n$ that collapses onto $S$ (i.e.\ $T\geq S$), if a partial $W_2$-basis $B$ is visible in $T$, then it is also visible in $S$.
Therefore, the map $(B,T)\mapsto (B,S)$ defines a retraction from $|\pi_2^{-1}((K_n)_{\ge S})|$ to $|\pi_2^{-1}(S)|=|\BP^*_S|$. Using Lemma~\ref{lemma:quillen2}, this retraction is homotopic to the identity. Thus $|\pi_2^{-1}((K_n)_{\ge S})|$ is homotopy equivalent to $|\BP^*_S|$.
 By Lemma~\ref{lemma:homotopy-type-BP-S}, this space is $(\lfloor\frac{n}{2}\rfloor-2)$-connected, which concludes our proof. 
 \end{proof}

\section{Cheap rebuilding property for the stabilizers, and proof of the main results}
\label{sect:CRP for stab and concl}

In the previous section, we determined the homotopy type of $|\BP_n^*|$. In order to apply the criterion given by Theorem~\ref{Th: 10.10 of [ABFG]}, we are left with establishing the cheap rebuilding property for cell stabilizers of the $\Out^0(W_n)$-action on $|\BP_n^*|$, i.e.\ stabilizers in $\Out^0(W_n)$ of partial $W_2$-bases of $W_n$. This is the contents of Proposition~\ref{prop:stabilizers} below. The proof of our main theorem will be completed afterwards.

Given a finite set $\calf$ of conjugacy classes of subgroups of $W_n$, we denote by $\Out(W_n,\calf)$ the subgroup of $\Out(W_n)$ consisting of all automorphisms that preserve $\calf$ (setwise). Cell stabilizers of the $\Out^0(W_n)$-action on $|\BP_n^*|$ are virtually isomorphic to $\Out(W_n,\mathfrak{B})$ for some partial $W_2$-basis $\mathfrak{B}$ of $W_n$.

\begin{prop}\label{prop:stabilizers}
Let $n\ge 3$, and let $\mathfrak{B}=\{[A_1],\dots,[A_\ell]\}$ be a (nonempty) partial $W_2$-basis of $W_n$. Then the group $\Out(W_n,\mathfrak{B})$ satisfies the cheap $\beta$-rebuilding property for every $\beta\in\mathbb{N}$.
\end{prop}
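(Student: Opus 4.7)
The plan is to apply Theorem~\ref{Th: 10.10 of [ABFG]} to the action of $\Out(W_n,\mathfrak{B})$ on the spine $|K(W_n,\mathfrak{B})|$ of the relative Outer space $\calo(W_n,\mathfrak{B})$ introduced in Section~2. Since we want the conclusion for every $\beta \in \N$, we will verify the three hypotheses of that theorem uniformly in $\alpha \in \N$.

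First, $|K(W_n,\mathfrak{B})|$ is contractible by Guirardel--Levitt \cite[Corollary~4.4]{GL}, so it is $(\alpha-1)$-connected for every $\alpha$. Second, the action of $\Out(W_n,\mathfrak{B})$ on the spine is cocompact (a standard feature of relative Outer space), so the quotient is a finite simplicial complex and in particular has finite $\alpha$-skeleton for every $\alpha$; after passing to a barycentric subdivision if needed, the pointwise and setwise stabilizers of every cell coincide. Third, $\Out(W_n)$ is residually finite by \cite[Theorem~1.5]{MO}, hence so is $\Out(W_n,\mathfrak{B}) \subseteq \Out(W_n)$.

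The main task is thus to control the cell stabilizers for this action. A cell $\sigma$ of $|K(W_n,\mathfrak{B})|$ corresponds to a chain of Grushko $(W_n,\mathfrak{B})$-trees collapsing onto each other; its stabilizer in $\Out(W_n,\mathfrak{B})$ fits into a short exact sequence whose quotient is a finite group (coming from symmetries of the underlying quotient graph of groups and from the finite groups $\Out(A_i) \cong \Out(W_2)$), and whose kernel is the group of twists at the vertices of that graph of groups. Since each vertex group $A_i \cong W_2$ is virtually cyclic and $\mathfrak{B}$ is nonempty, the twist subgroup contains a finite-index copy of $\mathbb{Z}^m$ with $m\geq 1$, so the full cell stabilizer is infinite virtually abelian. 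By Proposition~\ref{prop:abelian}, $\mathbb{Z}^m$ has the cheap $\gamma$-rebuilding property for every $\gamma \in \N$, and Proposition~\ref{prop:finite-index} transfers this to the whole stabilizer. In particular, for every $\alpha \in \N$ and every $j \le \alpha$, the stabilizer of a $j$-cell of $|K(W_n,\mathfrak{B})|$ has the cheap $(\alpha-j)$-rebuilding property. Theorem~\ref{Th: 10.10 of [ABFG]} then yields the cheap $\alpha$-rebuilding property for $\Out(W_n,\mathfrak{B})$ for every $\alpha$.

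The main obstacle I foresee is the structural analysis of cell stabilizers: while the Bass--Serre decomposition into a finite symmetry group by a virtually abelian twist subgroup is classical in principle, one must identify this decomposition explicitly inside $\Out(W_n,\mathfrak{B})$ and check that the twist subgroup is virtually $\mathbb{Z}^m$ with $m \geq 1$. The positivity of $m$ relies exactly on the hypothesis that $\mathfrak{B}$ be nonempty (so that at least one infinite vertex group contributes to the twist subgroup), which is precisely what makes the $\alpha=0$ case of the rebuilding property hold for residually finite stabilizers.
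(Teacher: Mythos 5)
Your proposal is correct and follows essentially the same route as the paper: apply Theorem~\ref{Th: 10.10 of [ABFG]} to the action of $\Out(W_n,\mathfrak{B})$ on the spine $|K(W_n,\mathfrak{B})|$ of the relative Outer space, using contractibility from Guirardel--Levitt, cocompactness, residual finiteness, and the fact that cell stabilizers are infinite virtually abelian (hence satisfy the cheap rebuilding property in all degrees by Propositions~\ref{prop:abelian} and~\ref{prop:finite-index}). The only cosmetic difference is that where you describe the stabilizer decomposition into a finite symmetry group and a twist subgroup informally, the paper makes this precise by citing Levitt's results to identify the tree stabilizer as virtually $W_2^d$ with $d\ge 1$ (oriented edges with infinite-origin vertex group), which is virtually $\mathbb{Z}^d$; the paper also relies on the poset structure of the spine to get pointwise = setwise stabilizers directly, without needing a barycentric subdivision.
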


\begin{proof}
The group $G=\Out(W_n,\mathfrak{B})$ acts on the spine $|K|=|K(W_n,\mathfrak{B})|$ of the relative Outer space, and we will apply Theorem~\ref{Th: 10.10 of [ABFG]} to this action. First, notice that $\Out(W_n)$ is residually finite, see e.g.~\cite[Theorem~1.5]{MO} since $W_n$ is residually finite and infinitely-ended. Therefore, its subgroup $\Out(W_n,\mathfrak{B})$ is also residually finite. The space $|K|$ is contractible by \cite[Corollary~4.4]{GL}. The quotient $\Out(W_n,\mathfrak{B})\backslash |K|$ is a finite simplicial complex. 

We will now prove that the $\Out(W_n,\mathfrak{B})$-stabilizer of any simplex $\tau$ of $|K|$ contains a finite-index infinite abelian subgroup. Indeed, the cell $\tau$ is represented by a finite chain $S_0\to\dots\to S_\ell$ of simplicial Grushko $(W_n,\mathfrak{B})$-trees where the arrows represent collapse maps (i.e.\ $S_{i+1}$ is obtained from $S_i$ by collapsing every connected component of a $W_n$-invariant subforest to a point). Let $\Stab_{G}^0(S_0)$ be the finite-index subgroup of $\Stab_{G}(S_0)$ consisting of the outer automorphisms that act trivially on the quotient graph $W_n\backslash S_0$. Then every element of $\Stab_{G}^0(S_0)$ also fixes all collapsed trees $S_1,\dots,S_\ell$. Thus $\Stab_{G}^0(S_0)\subseteq \Stab_{G}(\tau)\subseteq\Stab_{G}(S_0)$, and these are finite-index inclusions. As every infinite vertex stabilizer in $S_0$ is isomorphic to $W_2$,  it follows from results of Levitt \cite[Propositions 2.2 and 3.1]{Lev} that $\Stab_{G}(S_0)$ is virtually isomorphic to $W_2^d$, where $d\ge 1$ is the number of (oriented) edges of $W_n\backslash S_0$ whose origin has an infinite vertex group. This shows that $\Stab_{G}(\tau)$ contains a finite-index infinite abelian subgroup, and therefore it satisfies the $\beta$-cheap rebuilding property for any $\beta$ by Propositions~\ref{prop:abelian} and~\ref{prop:finite-index}. 

Notice that the description of the simplices of $|K|$ shows that in the action of $\Out(W_n,\mathfrak{B})$ on $|K|$, the setwise stabilizer of any cell is equal to its pointwise stabilizer. Therefore, Theorem~\ref{Th: 10.10 of [ABFG]} applies to show that $G$ satisfies the $\beta$-cheap rebuilding property for every $\beta$.   
\end{proof}

\begin{theo}\label{theo:main}
Let $n\ge 3$. Then $\Out(W_n)$ has the cheap $\alpha$-rebuilding property for $\alpha=\lfloor {\frac{n}{2}}\rfloor-1$.
\end{theo}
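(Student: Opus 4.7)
The plan is to apply Theorem~\ref{Th: 10.10 of [ABFG]} to the action of the finite-index subgroup $\Out^0(W_n)$ on the complex $\Omega = |\BP_n^*|$, with $\alpha = \lfloor \tfrac{n}{2}\rfloor - 1$. By Proposition~\ref{prop:finite-index}, the cheap $\alpha$-rebuilding property for $\Out^0(W_n)$ then transfers to $\Out(W_n)$, yielding the theorem.

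First I would verify the structural hypotheses of Theorem~\ref{Th: 10.10 of [ABFG]}. Residual finiteness of $\Out^0(W_n)$ is inherited from that of $\Out(W_n)$ (see~\cite{MO}). A simplex of $|\BP_n^*|$ corresponds to a chain $B_0 < B_1 < \cdots < B_j$ in the poset $\BP_n^*$, and since every element of $\Out(W_n)$ preserves the inclusion order, any group element fixing this chain setwise must fix each $B_i$ individually (the $B_i$ being pairwise distinct and strictly nested); hence pointwise and setwise stabilizers coincide. For the finiteness of the $\alpha$-skeleton of $\Out^0(W_n) \backslash \Omega$, I would invoke that $\Out(W_n)$ acts with finitely many orbits on partial $W_2$-bases of each fixed cardinality (any such basis extends to a full basis of $W_n$, and the action on full bases has only finitely many orbits); since $\Out^0(W_n)$ is of finite index in $\Out(W_n)$ and preserves $\BP_n^*$, finitely many $\Out^0(W_n)$-orbits of cells of each dimension follow.

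The two remaining hypotheses of Theorem~\ref{Th: 10.10 of [ABFG]} are exactly what the preceding sections deliver. The $(\alpha-1)$-connectivity of $\Omega$ is Theorem~\ref{theo:bp}. For the cell stabilizer condition, the $\Out^0(W_n)$-stabilizer of a $j$-cell is, as recorded in the introduction of this section, virtually isomorphic to $\Out(W_n, \mathfrak{B})$ for some partial $W_2$-basis $\mathfrak{B}$; Proposition~\ref{prop:stabilizers} then asserts that $\Out(W_n, \mathfrak{B})$ satisfies the cheap $\beta$-rebuilding property for every $\beta \in \N$, and this transfers to the cell stabilizer via Proposition~\ref{prop:finite-index}, covering in particular $\beta = \alpha - j$ for every $j \leq \alpha$. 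The genuine technical obstacles, namely the homotopy type of $|\BP_n^*|$ and the cheap rebuilding property for stabilizers of partial bases, have thus already been overcome in the preceding sections, so the proof at this stage reduces to the verification just sketched.
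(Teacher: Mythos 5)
Your proposal follows essentially the same route as the paper: apply Theorem~\ref{Th: 10.10 of [ABFG]} to the $\Out^0(W_n)$-action on $|\BP_n^*|$, using Theorem~\ref{theo:bp} for the connectivity and Proposition~\ref{prop:stabilizers} (plus Proposition~\ref{prop:finite-index}) for the cell stabilizers, then transfer to $\Out(W_n)$ via Proposition~\ref{prop:finite-index}. The structural verifications (coincidence of setwise and pointwise stabilizers because the vertices of a simplex are partial $W_2$-bases of distinct cardinalities, finiteness of the quotient via transitivity of $\Out(W_n)$ on partial $W_2$-bases of a fixed cardinality) are all correct and match the paper.

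There is, however, one genuine gap: you invoke Theorem~\ref{theo:bp} for the $(\alpha-1)$-connectivity of $|\BP_n^*|$, but that theorem is stated only for $n\ge 4$. When $n=3$ one has $\alpha=0$, and your argument as written does not cover this case. The paper handles $n=3$ separately by observing that $\Out(W_3)\cong\mathrm{PGL}(2,\mathbb{Z})$ is infinite, which for a residually finite group is equivalent to the cheap $0$-rebuilding property. Alternatively, you could note that when $\alpha=0$, hypothesis~(2) of Theorem~\ref{Th: 10.10 of [ABFG]} reduces to $|\BP_3^*|$ being nonempty (which it is, by Remark~\ref{Rem: Hurewicz and Whitehead}), so the same machinery does go through; but either way the $n=3$ case needs an explicit word, since a blanket appeal to Theorem~\ref{theo:bp} does not apply there.
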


\begin{proof}
The group $\Out(W_n)$ is residually finite, see e.g.\ \cite[Theorem~1.5]{MO}.

When $n=3$, the theorem holds because $\Out(W_3)$ 
is infinite (this is exactly the cheap $0$-rebuilding property); it is isomorphic to $\mathrm{PGL}(2,\mathbb{Z})$ (see for instance~\cite[Proposition~2.2]{Gue}). We now assume that $n\ge 4$. By Proposition~\ref{prop:finite-index}, it is enough to prove that the finite-index subgroup $\Out^0(W_n)$ (introduced in the previous section) consisting of all automorphisms that preserve the conjugacy class of each element of order $2$  satisfies the cheap $\alpha$-rebuilding property.

We will apply Theorem~\ref{Th: 10.10 of [ABFG]} to the action of $\Out^0(W_n)$ on $|\BP_n^*|$. Every automorphism that preserves a cell $\sigma$ of $|\BP_n^*|$ fixes it pointwise (as the vertices of a simplex represent partial $W_2$-bases of different cardinalities). By the universal property of free products, $\Out(W_n)$ acts transitively on the set of partial $W_2$-bases of a given cardinality, thus the quotient simplicial complex $\Out(W_n)\backslash|\BP_n|$ is finite, and so is $\Out^0(W_n)\backslash|\BP_n^*|$. By Theorem~\ref{theo:bp}, the complex $|\BP_n^*|$ is $(\alpha-1)$-connected ($n\geq 4$). Now, let $\sigma$ be a cell of $|\BP_n^*|$ corresponding to a chain $\mathfrak{B}_1\subseteq\dots\subseteq \mathfrak{B}_k$ of partial $W_2$-bases, with $\mathfrak{B}_k=\{[A_1],\dots,[A_\ell]\}$. Then the stabilizer $G_\sigma$ of $\sigma$ is equal to $\Out^0(W_n,\mathfrak{B}_k):=\Out^0(W_n)\cap\Out(W_n,\mathfrak{B}_k)$: indeed, the definition of $\Out^0(W_n)$ ensures that every outer automorphism in $\Out^0(W_n,\mathfrak{B}_k)$ fixes each of the conjugacy classes $[A_i]$ (as opposed to permuting them) and therefore it fixes the chain $\mathfrak{B}_1\subseteq\dots\subseteq \mathfrak{B}_k$.
By Proposition~\ref{prop:stabilizers}, the group $\Out(W_n,\mathfrak{B}_k)$ satisfies the cheap $\beta$-rebuilding property for every $\beta\in\mathbb{N}$. Therefore, so does its finite-index subgroup $G_\sigma$.
The assumptions of Theorem~\ref{Th: 10.10 of [ABFG]} are thus satisfied. This concludes our proof.    
\end{proof}

Theorem~\ref{theointro:main} from the introduction now follows from Theorems~\ref{theo:main} and~\ref{th:ABFG-10.20}. We now explain how to deduce Corollary~\ref{corintro:main}.

\begin{proof}[Proof of Corollary~\ref{corintro:main}]
The group $\Out(W_n)$ is of type $F_\infty$. Indeed, using \cite[Theorem~7.3.1]{Geo},
this follows from the fact that it acts without cell inversions on the spine $|K_n|$ of Outer space, which is contractible, with finite stabilizers. Therefore, Lück's approximation theorem \cite{Luc} applies.
\end{proof}

\section{\texorpdfstring{Non-vanishing of the top-dimensional $\ell^2$-Betti number}{Non-vanishing of the top-dim L2 number}}\label{sec:top-dimension}

Applying Gaboriau-Noûs' trick \cite{GN}, we will now establish the following theorem.
\begin{theo}\label{theo:gaboriau-nous}
For every $n\ge 3$, one has $\beta^{(2)}_{n-2}(\Out(W_n))>0$.
\end{theo}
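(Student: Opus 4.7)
\textbf{Proof plan for Theorem \ref{theo:gaboriau-nous}.} The strategy is to transcribe the argument of Gaboriau--Noûs \cite[Theorem~1.1]{GN} from the $\Out(F_N)$-setting to the $\Out(W_n)$-setting; the relevant pieces of the geometric infrastructure are already available in this paper. The three inputs that make this adaptation possible are: $(i)$ the equality $\mathrm{vcd}(\Out(W_n)) = n-2$, due to Krstić--Vogtmann \cite[Corollary~10.2]{KV}; $(ii)$ the existence of the spine $|K_n|$ of Outer space of $W_n$, which is a contractible finite-dimensional CW-complex of dimension exactly $n-2$, on which $\Out(W_n)$ acts cocompactly and without cell inversions, with finite cell stabilizers; $(iii)$ the residual finiteness of $\Out(W_n)$ \cite[Theorem~1.5]{MO}.

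From $(ii)$ and $(iii)$, combined with a standard Selberg-type argument applied to the finite stabilizers of cells of $|K_n|$, one extracts a torsion-free finite-index subgroup $\Gamma \leq \Out(W_n)$ acting freely, cellularly and cocompactly on $|K_n|$. The quotient $\Gamma \backslash |K_n|$ is then a finite classifying space for $\Gamma$ of dimension $n-2$, so $\Gamma$ is of type $F$ with $\mathrm{cd}(\Gamma) = n-2$. By the multiplicativity of $\ell^2$-Betti numbers under passage to a finite-index subgroup, proving $\beta^{(2)}_{n-2}(\Gamma) > 0$ is equivalent to proving the theorem.

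To establish $\beta^{(2)}_{n-2}(\Gamma) > 0$, I would follow Gaboriau--Noûs and compute the top $\ell^2$-Betti number through the cellular $\ell^2$-chain complex $C^{(2)}_{\bullet}(|K_n|)$ of the free cocompact $\Gamma$-CW-complex $|K_n|$. Since $|K_n|$ is contractible of dimension $n-2$, the top $\ell^2$-homology is $\ker(\partial_{n-2}^{(2)})$, whose von Neumann dimension is precisely $\beta^{(2)}_{n-2}(\Gamma)$. The Gaboriau--Noûs trick consists in exhibiting an infinite ``diagonal'' family of top-dimensional cells of $|K_n|$ along which one can produce explicit non-trivial $\ell^2$-cycles, by using stabilizers in $\Out(W_n)$ of natural subconfigurations (e.g.\ maximal collections of $W_2$-free factors) which contain large virtually abelian subgroups whose action on the top cells is sufficiently free. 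Concretely, one identifies inside $\Gamma$ a subgroup commensurable to a large power of $\mathbb Z$ (coming from a Dehn-twist style construction around a maximal partial $W_2$-basis, analogous to the Levitt-type computation used in the proof of Proposition~\ref{prop:stabilizers}) for which a non-vanishing $\ell^2$-cycle can be written down on the top-dimensional stratum.

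The main obstacle, and essentially the only non-routine point, will be to check that the specific algebraic and geometric features Gaboriau--Noûs exploit in the $\Out(F_N)$ case have their counterparts here. In our setting the ``free factor of rank one'' used in the $F_N$-argument is replaced by a $W_2$-free factor, and the associated ``Dehn twist''-type subgroup is virtually isomorphic to a power of $W_2$ rather than of $\mathbb Z$ (as in Proposition~\ref{prop:stabilizers}); this changes the bookkeeping but not the substance of the cycle construction, because after passing to the torsion-free finite-index subgroup $\Gamma$ such powers of $W_2$ are replaced by honest free abelian subgroups of the same rank. Once this verification is made, the cellular $\ell^2$-chain argument of \cite{GN} applies verbatim, yielding $\beta^{(2)}_{n-2}(\Gamma) > 0$ and hence the theorem.
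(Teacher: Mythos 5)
Your high-level framework is the same as the paper's (use the vcd equality from Krsti\'c--Vogtmann, the proper cocompact action of $\Out(W_n)$ on the contractible $(n-2)$-dimensional spine $|K_n|$, and the Gaboriau--No\^us criterion). But the decisive step — exhibiting the subgroup that actually witnesses non-vanishing — is wrong. You propose to use a subgroup ``commensurable to a large power of $\mathbb Z$'' (equivalently, a power of $W_2$), coming from the Dehn-twist-type stabilizers appearing in Proposition~\ref{prop:stabilizers}. Such a subgroup is infinite virtually abelian, hence amenable, hence has \emph{all} $\ell^2$-Betti numbers equal to zero. The whole point of the Gaboriau--No\^us argument (Theorem~1.6 and Proposition~3.1 of~\cite{GN}) is a monotonicity statement: given the proper cocompact action on a contractible $(n-2)$-dimensional complex, it suffices to find a subgroup $\Lambda\le\Out(W_n)$ with $\beta^{(2)}_{n-2}(\Lambda)>0$. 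An amenable $\Lambda$ can never serve this purpose, and there is no ``explicit $\ell^2$-cycle on the top stratum'' to be written down over an abelian subgroup — your plan cannot close.

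The paper instead uses \emph{partial conjugations}, and here $W_2$-factors are not what is needed. Identify $W_3=\langle x_1,x_2,x_3\rangle\subseteq W_n$, take a finite-index characteristic subgroup $F\le W_3$ which is free and nonabelian, and embed $F^{n-3}\rtimes\Aut(W_3)$ into $\Aut(W_n)$ via $((w_4,\dots,w_n),\varphi)\mapsto\Phi$ with $\Phi(x_i)=\varphi(x_i)$ for $i\le 3$ and $\Phi(x_i)=w_i^{-1}x_iw_i$ for $i\ge 4$. Lifting a free subgroup $F_2\le\Out(W_3)$ to $\Aut(W_3)$ produces a copy of $F^{n-3}\rtimes F_2$ inside $\Out(W_n)$; since $F$ is a nonabelian free group, this group has positive $\ell^2$-Betti number in degree $n-2$, and \cite[Theorem~1.6 and Proposition~3.1]{GN} then give $\beta^{(2)}_{n-2}(\Out(W_n))>0$. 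Note also that passing to a torsion-free finite-index subgroup acting freely on $|K_n|$ is an unnecessary detour: the Gaboriau--No\^us criterion applies directly to the proper (not necessarily free) cocompact action.
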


\begin{proof}
As was established by Krsti\'c and Vogtmann in \cite[Corollary~10.2]{KV}, the virtual cohomological dimension of $\Out(W_n)$ is equal to $n-2$. In fact $\Out(W_n)$ acts properly and cocompactly on the contractible simplicial complex $|K_n|$, which is of dimension $n-2$. Therefore, combining \cite[Theorem~1.6 and Proposition~3.1]{GN}, it is enough to find a subgroup of $\Out(W_n)$ isomorphic to $F^{n-3}\rtimes F_2$, where $F$ is a finitely generated free group.

Let $\{x_1,\dots,x_n\}$ be a basis for $W_n$. Let $F$ be a finite-index characteristic free subgroup of $W_3$. Consider the semi-direct product $G=F^{n-3}\rtimes\Aut(W_3)$, where $\Aut(W_3)$ acts diagonally on $F^{n-3}$. Then, after identifying $W_3$ with $\langle x_1,x_2,x_3\rangle$, the group $G$ embeds into $\Aut(W_n)$, by sending $((w_4,\dots,w_n),\varphi)$ to the automorphism $\Phi$ sending $x_i$ to $\varphi(x_i)$ for $i\le 3$, and sending $x_i$ to $w_i^{-1}x_iw_i$ for $i\ge 4$. Notice that whenever $\varphi$ has nontrivial image in $\Out(W_3)$, the resulting automorphism $\Phi$ has nontrivial image in $\Out(W_n)$. Now choose a free subgroup $F_2\subseteq\Aut(W_3)$ which embeds through the quotient map $\Aut(W_3)\to\Out(W_3)$: this is done by lifting a rank $2$ free subgroup of $\Out(W_3)$. We thus obtain an embedding of $F^{n-3}\rtimes F_2$ inside $\Out(W_n)$, which completes our proof.
\end{proof}

\begin{theo}\label{Th. homotopy type}
For every $n\ge 3$, the complex $|\BP_n^*|$ has the homotopy type of a nontrivial bouquet of spheres of dimension $\lfloor\frac{n}{2}\rfloor-1$.
\end{theo}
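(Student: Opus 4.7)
The plan is to combine Remark~\ref{Rem: Hurewicz and Whitehead} with the tools already developed in the paper to reduce the statement to ruling out one option in a dichotomy. By Theorem~\ref{theo:bp}, the complex $|\BP_n^*|$ is $(\lfloor \frac{n}{2}\rfloor - 2)$-connected and has dimension $\lfloor\frac{n}{2}\rfloor - 1$, so by the Hurewicz--Whitehead argument of Remark~\ref{Rem: Hurewicz and Whitehead} it is either contractible or homotopy equivalent to a nontrivial bouquet of spheres of dimension $\lfloor\frac{n}{2}\rfloor - 1$. For $n = 3$, the dimension is $0$ and the remark itself provides infinitely many distinct $0$-cells (the conjugacy classes $\langle x_1^*, x_3^*(x_1^*x_3^*)^n x_2^*(x_3^*x_1^*)^n x_3^*\rangle$), so $|\BP_3^*|$ is a countable discrete set with at least two points, hence a nontrivial bouquet of $0$-spheres. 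It then suffices to rule out contractibility for $n\ge 4$.

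For $n \geq 4$, I would argue by contradiction: assume $|\BP_n^*|$ is contractible, and re-run the proof of Theorem~\ref{theo:main} with an arbitrary $\alpha \in \mathbb N$ in place of $\lfloor\frac{n}{2}\rfloor - 1$. The action of $\Out^0(W_n)$ on $|\BP_n^*|$ still has finite quotient skeleton, the setwise and pointwise stabilizers of cells still agree, and the connectivity assumption \eqref{item: Omega alpha-1-connected} of Theorem~\ref{Th: 10.10 of [ABFG]} is now automatic since $|\BP_n^*|$ is assumed contractible. For cell stabilizers, observe that they are only non-vacuously present in dimensions $j \leq \lfloor\frac{n}{2}\rfloor - 1$, and Proposition~\ref{prop:stabilizers} together with Proposition~\ref{prop:finite-index} shows that each such stabilizer satisfies the cheap $\beta$-rebuilding property for every $\beta \in \mathbb N$, and in particular for $\beta = \alpha - j$. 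Thus Theorem~\ref{Th: 10.10 of [ABFG]} applies and gives the cheap $\alpha$-rebuilding property for $\Out^0(W_n)$, hence (via Proposition~\ref{prop:finite-index}) for $\Out(W_n)$.

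Since $\alpha$ was arbitrary and $\Out(W_n)$ is of type $F_\infty$, Theorem~\ref{th:ABFG-10.20} applied with $\mathbb K = \mathbb Q$, combined with Lück's approximation theorem as in the proof of Corollary~\ref{corintro:main}, would force $\beta^{(2)}_j(\Out(W_n)) = 0$ for every $j \in \mathbb N$. This contradicts Theorem~\ref{theo:gaboriau-nous}, which asserts the non-vanishing of $\beta^{(2)}_{n-2}(\Out(W_n))$. Consequently $|\BP_n^*|$ cannot be contractible, and the Remark~\ref{Rem: Hurewicz and Whitehead} dichotomy finishes the proof.

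The only delicate point is checking that Theorem~\ref{Th: 10.10 of [ABFG]} can genuinely be invoked with $\alpha$ arbitrarily large, given that $|\BP_n^*|$ has fixed dimension $\lfloor\frac{n}{2}\rfloor - 1$. But this is not really an obstacle: the cell stabilizer hypothesis is vacuous above the dimension of the complex, and on the remaining cells it is satisfied for all rebuilding levels by Proposition~\ref{prop:stabilizers}. So the entire proof amounts to re-examining the proof of Theorem~\ref{theo:main} under the contrafactual hypothesis of contractibility and observing that the extra connectivity allows the induction parameter $\alpha$ to be pushed to infinity, which is precisely where the clash with the non-vanishing top-dimensional $\ell^2$-Betti number of Theorem~\ref{theo:gaboriau-nous} arises.
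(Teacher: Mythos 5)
Your proof is correct and takes essentially the same route as the paper: reduce to noncontractibility via Remark~\ref{Rem: Hurewicz and Whitehead} and Theorem~\ref{theo:bp}, then argue by contradiction that contractibility would let the proof of Theorem~\ref{theo:main} run with $\alpha$ arbitrary (the connectivity hypothesis being automatic and the stabilizer hypothesis holding at all levels by Proposition~\ref{prop:stabilizers}), forcing all $\ell^2$-Betti numbers of $\Out(W_n)$ to vanish, which contradicts Theorem~\ref{theo:gaboriau-nous}. Your explicit treatment of the $n=3$ case and of the vacuousness of the stabilizer condition above $\dim|\BP_n^*|$ are just small amplifications of what the paper leaves implicit.
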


\begin{proof}
By Theorem~\ref{theo:bp} and Remark~\ref{Rem: Hurewicz and Whitehead}, it suffices to prove that $|\BP_n^*|$ is noncontractible. We established in Proposition~\ref{prop:stabilizers} that all cell stabilizers for the $\Out^0(W_n)$-action on $|\BP_n^*|$ have the cheap $\beta$-rebuilding property for every $\beta\in\mathbb{N}$. If $|\BP^*_n|$ were contractible, the argument from the proof of Theorem~\ref{theo:main} would show that $\Out(W_n)$ has the cheap $\beta$-rebuilding property for every $\beta\in\mathbb{N}$, in particular all its $\ell^2$-Betti numbers would vanish. This is forbidden by Theorem~\ref{theo:gaboriau-nous}.
\end{proof}

\footnotesize

    \begin{flushleft}
		Damien Gaboriau\\
		ENS de Lyon, CNRS, 
Unité  Mathématiques Pures et Appliquées, 46 Allée d'Italie, 69007 Lyon, France\\
        Université de Lyon, ENS de Lyon, CNRS, UMPA UMR 5669, F-69364, LYON, France
        \\
		\emph{e-mail:~}\texttt{damien.gaboriau@ens-lyon.fr}
		\\[4mm]
	\end{flushleft}

	\begin{flushleft}
		Yassine Guerch\\
		Universit\'e Paris-Saclay, CNRS,  Laboratoire de math\'ematiques d'Orsay, 91405, Orsay, France \\
		\emph{e-mail:~}\texttt{yassine.guerch@universite-paris-saclay.fr}\\[4mm]
	\end{flushleft}

	\begin{flushleft}
		Camille Horbez\\
		Universit\'e Paris-Saclay, CNRS,  Laboratoire de math\'ematiques d'Orsay, 91405, Orsay, France \\
		\emph{e-mail:~}\texttt{camille.horbez@universite-paris-saclay.fr}\\[4mm]
	\end{flushleft}

\end{document}